\newtheorem{theorem}{Theorem}[section]
\newtheorem{proposition}[theorem]{Proposition}
\newtheorem{lemma}[theorem]{Lemma}
\newtheorem{definition}[theorem]{Definition}
\newtheorem{remark}[theorem]{Remark}
\newcommand\E{\mathbb{E}}
\newcommand\R{\mathbb{R}}
\newcommand\Z{\mathbb{Z}}
\newcommand\N{\mathbb{N}}
\newcommand\C{\mathbb{C}}
\newcommand\eps{\varepsilon}
\begin{document}
\title[Odd order cases of the logarithmically averaged Chowla conjecture]{Odd order cases of the logarithmically averaged Chowla conjecture}

\author{Terence Tao}
\address{Department of Mathematics, UCLA\\
405 Hilgard Ave\\
Los Angeles CA 90095\\
USA}
\email{tao@math.ucla.edu}

\author{Joni Ter\"av\"ainen}
\address{Department of Mathematics and Statistics, University of Turku\\
20014 Turku\\
Finland}
\email{joni.p.teravainen@utu.fi}

\begin{abstract}
A famous conjecture of Chowla states that the Liouville function $\lambda(n)$ has negligible correlations with its shifts. Recently, the authors established a weak form of the logarithmically averaged Elliott conjecture on correlations of multiplicative functions, which in turn implied all the odd order cases of the logarithmically averaged Chowla conjecture. In this note, we give a new and shorter proof of the odd order cases of the logarithmically averaged Chowla conjecture. In particular, this proof avoids all mention of ergodic theory, which had an important role in the previous proof. 
\end{abstract}

\maketitle

\section{Introduction}

Let $\lambda(n)$ be the Liouville function, defined as $\lambda(n) \coloneqq (-1)^{\Omega(n)}$, with $\Omega(n)$ being the number of prime factors of the integer $n$ counting multiplicity. The distribution of $\lambda(n)$ has been extensively studied.  For instance, the statement
\begin{align*}
\frac{1}{x}\sum_{n\leq x}\lambda(an+b)=o_{x \to \infty}(1)
\end{align*}
for any fixed $a\in \mathbb{N}$, $b\in \mathbb{Z}$ is equivalent to the prime number theorem in arithmetic progressions by an elementary argument. It was conjectured by Chowla \cite{chowla} that we have the significantly more general correlation estimate
\begin{align}\label{eq9}
\frac{1}{x}\sum_{n\leq x}\lambda(a_1n+b_1)\cdots \lambda(a_kn+b_k)=o_{x \to \infty}(1)    
\end{align}
for any $k \geq 1$, $a_1,\dots,a_k,b_1,\dots,b_k\in \mathbb{N}$ satisfying the non-degeneracy condition $a_ib_j-a_jb_i\neq 0$ for $1\leq i<j\leq k$. The non-degeneracy condition may be omitted when $k$ is odd, since a degenerate pair $\lambda(a_i n + b_i) \lambda(a_j n + b_j)$ with $a_i b_j - a_j b_i = 0$ is constant in $n$ and can therefore be deleted. One can of course extend this conjecture to the case when the $b_1,\dots,b_k$ are integers rather than natural numbers (after defining $\lambda$ arbitrarily on negative numbers), but this does of course leads to an equivalent conjecture after applying a translation in the $n$ variable.\\

Chowla's conjecture \eqref{eq9} can be thought of as a simpler analogue of the famous Hardy-Littlewood prime $k$-tuple conjecture \cite{hl}, \cite[Section 1]{gt-linear}, which predicts an asymptotic for the correlations of the von Mangoldt function $\Lambda(n)$. Any rigorous implication between \eqref{eq9} and the Hardy-Littlewood $k$-tuples conjecture, however, would require good savings of the type $O((\log x)^{-A})$ for the error term $o_{x \to \infty}(1)$ in \eqref{eq9} and a large regime of uniformity in the parameters $a_1,\dots,a_k$, $b_1,\dots,b_k$; none of the currently known partial progress on Chowla's conjecture for $k>1$ fulfills these additional requirements. Nevertheless, Chowla's conjecture is subject to the well-known \emph{parity problem} of sieve theory, which also obstructs sieve theoretic approaches to the Hardy-Littlewood prime $k$-tuple conjecture. The parity problem states the fact, first observed by Selberg (see \cite[Chapter 16]{fi}), that classical combinatorial sieves are unable to distinguish numbers with an odd and even number of prime factors from each other.\\

One can also view Chowla's conjecture as a special case of Elliott's conjecture on correlations of multiplicative functions (see \cite[Section 1]{tt} for a modern version of this conjecture, avoiding a technical counterexample to the original conjecture in \cite{elliott}).\\

In \cite{mrt}, Matom\"aki, Radiwi\l{}\l{} and the first author showed that Chowla's conjecture holds on average over the shifts $b_1,\ldots, b_k$, and this was generalised  by Frantzikinakis \cite{frantz} to averages over independent polynomials. Nevertheless, not much is known in the case of individual shifts, unless one considers the logarithmically averaged\footnote{If $1\leq \omega(x)\leq x$ is any function tending to infinity, one could equally well consider \eqref{eq11} with a sum over $\frac{x}{\omega(x)}\leq n\leq x$, with the $\log x$ normalisation replaced by $\log \omega(x)$. In fact, this is what is done in \cite{tao}, \cite{tt}.} version of the conjecture, which states that
\begin{align}\label{eq11}
 \frac{1}{\log x}\sum_{n\leq x}\frac{\lambda(a_1n+b_1)\cdots \lambda(a_kn+b_k)}{n}=o_{x \to \infty}(1),   
\end{align}
provided again that $a_ib_j-a_jb_i\neq 0$ for $1\leq i<j\leq k$. These logarithmically averaged correlations are certainly easier, since \eqref{eq9} implies \eqref{eq11} by partial summation. For the logarithmically averaged variant \eqref{eq11} of Chowla's conjecture, it was shown by the first author \cite{tao} that \eqref{eq11} is $o_{x\to \infty}(1)$ for $k=2$, and we recently showed in \cite{tt} that the same conclusion holds for all odd $k$. Both of these works actually handle more general correlations of bounded multiplicative functions, with \cite{tao} having the same assumptions as in Elliott's conjecture, and \cite{tt} having a non-pretentious assumption for the product of the multiplicative functions (see \cite[Corollary 1.4]{tt} for a precise statement). In addition, it was recently shown by Frantzikinakis and Host \cite[Theorem 1.4]{fh2} that if one replaces the weight $\frac{1}{n}$ in \eqref{eq11} with  $\frac{e^{2\pi i \alpha n}}{n}$ for any irrational $\alpha$, then the analogue of \eqref{eq11} holds for all  $k$. When it comes to conditional results, Frantzikinakis \cite{frantz_ergodic} showed that the logarithmically averaged Chowla conjecture would follow from ergodicity of the measure preserving system associated with the Liouville function.\\  

The proof in \cite{tt} of the odd order cases of the logarithmically averaged Chowla conjecture relies on deep results of Leibman \cite{leibman} and Le \cite{le} on ergodic theory, and is not much simpler than the proof of the structural theorem for correlations of general bounded multiplicative functions in that paper. Here we give a different, shorter proof of the odd order cases of Chowla's conjecture, which avoids all use of ergodic theory, although it now requires the Gowers uniformity of the von Mangoldt function, established by Green, the first author and Ziegler \cite{gt-linear}, \cite{gt-mobius}, \cite{gtz}.  More precisely, we will prove the following.

\begin{theorem}[Odd order cases of the logarithmic Chowla conjecture]\label{theo_logchowla} Let $k\geq 1$ be an odd natural number, and let $a_1,\dots,a_k,b_1,\dots,b_k$ be natural numbers.  Then we have
\begin{align*}
\frac{1}{\log x}\sum_{n\leq x}\frac{\lambda(a_1n+b_1)\cdots \lambda(a_k n+b_k)}{n}=o_{x \to \infty}(1).
\end{align*}
\end{theorem}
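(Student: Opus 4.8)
\section*{Proof proposal}

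The plan is to replace the ergodic-theoretic structure theory (Host--Kra theory together with the equidistribution results of Leibman \cite{leibman} and Le \cite{le}) used in \cite{tt} by a purely finitary input: the Gowers uniformity of the von Mangoldt function proved in \cite{gt-linear}, \cite{gt-mobius}, \cite{gtz}. The engine remains the entropy decrement argument of \cite{tao}, which produces an approximate invariance of the logarithmic correlations under dilating all of the linear forms by a typical prime; the oddness of $k$ will be used precisely to turn this symmetry into a sign $(-1)^k=-1$ rather than the vacuous $+1$ obtained for even $k$.

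Write $\ell_j(n)=a_jn+b_j$ and $F(n)=\prod_{j=1}^k\lambda(\ell_j(n))$. Since $k$ is odd, any degenerate pair $\lambda(\ell_i(n))\lambda(\ell_j(n))$ with $a_ib_j=a_jb_i$ is constant in $n$ and may be deleted, so I may assume the $\ell_j$ are pairwise non-proportional (the residual case $k=1$ being the logarithmically averaged prime number theorem in progressions). First I would run the entropy decrement argument in its logarithmically averaged form: there is a parameter $w=w(x)\to\infty$ growing slowly with $x$ and a density-$1$ set of primes $p$ of size comparable to $w$ for which
\begin{align*}
\frac{1}{\log x}\sum_{n\le x}\frac{F(n)}{n}=\frac{1}{\log x}\sum_{n\le x}\frac{F(n)}{n}\cdot\left(p\,\mathbf{1}_{p\mid\ell_1(n)}\right)+o(1),
\end{align*}
the detector $p\,\mathbf{1}_{p\mid\ell_1(n)}$ having logarithmic average $\approx 1$ and being, by the entropy argument, asymptotically uncorrelated with $F$ for most such $p$. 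Averaging over these $p$ and using complete multiplicativity of $\lambda$ --- namely $\lambda(\ell_1(n))=\lambda(p)\lambda(\ell_1(n)/p)=-\lambda(\ell_1(n)/p)$ on the event $p\mid\ell_1(n)$, together with the change of variables $\ell_1(n)=p m$ --- converts the right-hand side, up to $o(1)$ and the negligible contribution of $p^2\mid\ell_1(n)$, into $-\mathbb{E}_p$ of a logarithmic correlation of $k$ Liouville factors along new, now $p$-dependent, linear forms. The extra sign is exactly $\lambda(p)=-1$; performing the analogous manipulation in all $k$ coordinates (or iterating and bookkeeping carefully) yields a self-referential identity in which the correlation is expressed, modulo $o(1)$, as $(-1)^k=-1$ times a $p$-averaged family of correlations that have been dilated by $p$.

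The heart of the matter is then to show that this $p$-averaged family of dilated correlations is itself $o(1)$, for then the identity forces $\frac{1}{\log x}\sum_{n\le x}F(n)/n=o(1)$. Here I would invoke the inverse theorem for the Gowers $U^s$-norms together with the generalized von Neumann theorem: after the usual $W$-trick, a non-negligible correlation of $1$-bounded functions along a finite-complexity system of affine-linear forms would force one of the factors --- hence $\lambda$ restricted to a progression --- to correlate with a nilsequence of bounded complexity. But $\lambda$ (equivalently $\mu$) is orthogonal to nilsequences, which is exactly the content of the Gowers uniformity of the von Mangoldt function established in \cite{gt-linear}, \cite{gt-mobius}, \cite{gtz} and its accompanying M\"obius--nilsequence estimate; this is the finitary substitute for the structural input of \cite{tt}. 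The dilation symmetry produced by the entropy decrement step is what excludes the remaining scenario, in which the correlation is accounted for by a ``local'' obstruction (archimedean, or of small modulus) that is invisible to the nilsequence machinery: such an obstruction cannot survive the approximate invariance under multiplying every form by a typical prime $p$, once the sign $(-1)^k=-1$ is in play.

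The main obstacle, I expect, is the quantitative bookkeeping needed to make the entropy decrement symmetry interact cleanly with the Gowers-uniformity machinery. The entropy argument supplies its identity only for most $p$ in a range that grows extremely slowly in $x$, while the inverse theorem and the M\"obius--nilsequence estimate carry losses depending on the complexity of the underlying nilmanifold; reconciling these, and maintaining uniformity as the forms $\ell_j$ are repeatedly dilated by primes $p$ (so that their coefficients, and the moduli entering the $W$-trick, drift), is the delicate part. A secondary technical point is that one works throughout with general linear forms $a_jn+b_j$ rather than pure shifts $n+h_j$, so one must track how dilation acts on the coefficient vectors and verify that the relevant systems of forms remain in general position --- which is where the reduction to pairwise non-proportional $\ell_j$, valid since $k$ is odd, is used.
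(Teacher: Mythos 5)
Your proposal correctly identifies the two ingredients the paper actually uses---the entropy decrement argument producing an approximate invariance under dilation by a typical prime $p$, and the Gowers uniformity of the $W$-tricked von Mangoldt function---but the mechanism you propose for combining them is wrong, and if it worked it would prove too much.

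The fatal step is ``show that this $p$-averaged family of dilated correlations is itself $o(1)$'' by invoking the generalized von Neumann theorem and Gowers uniformity. The correlation in question is a short-range correlation of $k$ copies of $\lambda$; a generalized von Neumann estimate would bound it by a \emph{local} Gowers norm of $\lambda$ (at scale comparable to the shifts, not to $x$), and local Gowers uniformity of $\lambda$ is precisely one of the two open problems (cf.\ \cite{tao-higher}) that the paper is at pains to avoid. The Green--Tao--Ziegler theorems give Gowers uniformity of $\Lambda_{b,W}-1$, and M\"obius--nilsequence orthogonality gives smallness of the \emph{global} Gowers norms of $\mu$; neither yields the local uniformity of $\lambda$ that your step would require. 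Moreover, notice that your proposed conclusion $\E_p f_x(p)=o(1)$, together with the entropy decrement identity $f_x(1)=(-1)^k\E_p f_x(p)+O(\eps)$, would give $f_x(1)=o(1)$ for \emph{every} $k$, odd or even; the oddness of $k$ would be inert. That is a strong signal that the step cannot be carried out, since even-order logarithmic Chowla remains open.

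What the paper actually does is different and more delicate. It never shows any single dilated correlation is small; instead it applies the entropy decrement functional equation once to get $f_x(1)=-\E^{\log}_p f_x(p)+O(\eps)$ and twice (to semiprimes $p_1p_2$) to get $f_x(1)=+\E^{\log}_{p_1}\E^{\log}_{p_2} f_x(p_1p_2)+O(\eps)$, with opposite signs because $k$ is odd. The Gowers uniformity of $\Lambda_{b,W}-1$ is used only in a comparison step (Theorem~\ref{compar}), which replaces the prime average $\E^{\log}_{2^m<p\le 2^{m+1}} f_x(ap)$ by the corresponding average over integers coprime to $W$. In the accompanying generalized von Neumann lemma (Lemma~\ref{le1}) the Gowers-controlled function is $\Lambda_{b,W}(d)-1$ in the auxiliary $d$-variable, while the $\lambda$-factors are merely the bounded functions $\phi_j$---no uniformity of $\lambda$ is needed. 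After that comparison, a routine second-moment computation on the sieve weight $g(n)=\E^{\log}_{p_1}p_1\mathbf 1_{p_1\mid n}$ shows the prime average and the semiprime average agree up to $O(\eps)$, whence $f_x(1)=-f_x(1)+O(\eps)$ and the theorem follows. It is this ``once vs.\ twice'' sign flip, combined with the comparison-to-$W$-coprime-integers trick, that replaces the direct application of Gowers uniformity to $\lambda$ you had in mind. You would need to restructure your argument along these lines before it could be completed.
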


\begin{remark} As remarked previously, as we are dealing with an odd number of shifts of the Liouville function, there is no need to impose any non-degeneracy assumptions on the coefficients $a_1,\dots,a_k,b_1,\dots,b_k$.
\end{remark}

\begin{remark} Using the same proof as for Theorem \ref{theo_logchowla}, one could establish an analogous statement for the M\"obius function $\mu(\cdot)$, namely that
\begin{align*}
\frac{1}{\log x}\sum_{n\leq x}\frac{\mu(a_1n+b_1)^{c_1}\cdots \mu(a_kn+b_k)^{c_k}}{n}=o_{x \to \infty}(1)    
\end{align*}
whenever $c_j\geq 1$ are fixed integers with $c_1+\dots +c_k$ odd and $a_j,b_j$ are as above (see also \cite[Corollary 1.6]{tt}).
\end{remark}

\begin{remark} From the proof of Theorem \ref{theo_logchowla}, we see that for the three-point case $k=3$ of Theorem \ref{theo_logchowla}, we only need $U^3$-uniformity of the von Mangoldt function, which was established in \cite{gt-u3} and is simpler than the general $U^k$-uniformity result. In contrast, in \cite{tt} the $k=3$ case was no easier than the general case.
\end{remark}

It was shown by the first author in \cite{tao-higher} that the logarithmically averaged Chowla conjecture \eqref{eq11} for all $k$ is equivalent to two difficult conjectures, namely the logarithmically averaged Sarnak conjecture \cite[Conjecture 1.5]{tao-higher} and the (logarithmic) local Gowers uniformity of the Liouville function \cite[Conjecture 1.6]{tao-higher}. We manage to avoid these problems, since we will only be dealing with odd values of $k$. Indeed, it is natural that the even order cases of Chowla's conjecture are harder than the odd order ones, since one can use the K\'atai-Bourgain-Sarnak-Ziegler orthogonality criterion \cite{katai}, \cite{bsz} to show that the even order cases imply the odd order ones (see \cite[Remark 1.7]{tt}).  We also remark that the proof of Theorem \ref{theo_logchowla} does not require the Matom\"aki-Radziwi{\l}{\l} theorem \cite{mr}, in contrast to the $k=2$ result in \cite{tao} which relied crucially on this theorem.

\subsection{Acknowledgments}

TT was supported by a Simons Investigator grant, the James and Carol Collins Chair, the Mathematical Analysis \&
Application Research Fund Endowment, and by NSF grant DMS-1266164.

JT was supported by UTUGS Graduate School and project number 293876 of the Academy of Finland.\\

Part of this paper was written while the authors were in residence at MSRI in spring 2017, which is supported by NSF grant DMS-1440140.  We thank Kaisa Matom\"aki for helpful discussions and encouragement and Maksym Radziwi{\l}{\l} for suggesting the use of semiprimes in the entropy decrement argument.

\section{Notation} We use standard notation for arithmetic functions throughout this paper. In particular, $\lambda(n)$ is the Liouville function, $\mu(n)$ is the M\"obius function, $\Lambda(n)$ is the von Mangoldt function, and $\varphi(n)$ is the Euler totient function. Various letters, such as $m, n,d,a_j,b_j$, are reserved for integer variables. We use $(n,m)$ to denote the greatest common divisor of $n$ and $m$.  The variable $p$ in turn will always be a prime; in particular, summations such as $\sum_{p \in A} f(p)$ will always be understood to restricted to primes. We will use the standard Landau asymptotic notations $O(\cdot)$, $o(\cdot)$, with $o_{\eta\to 0}(1)$ for instance signifying a quantity that tends to $0$ as $\eta\to 0$; we also use the Vinogradov notation $X \ll Y$ for $X = O(Y)$.\\

For a proposition $P(n)$ depending on $n$, we denote by $1_{P(n)}$ the function that takes value $1$ if $P(n)$ is true and $0$ if it is false. We also use the expectation notations
\begin{align*}
\E_{n\in A}f(n)\coloneqq \frac{\sum_{n\in A}f(n)}{\sum_{n \in A} 1}
\end{align*}
and
\begin{align*}
\E_{n\in A}^{\log} f(n)\coloneqq \frac{\sum_{n\in A} \frac{f(n)}{n}}{\sum_{n \in A} \frac{1}{n}}
\end{align*}
whenever $A$ is a finite non-empty set and $f:A\to \mathbb{C}$ is a function.  If we replace the symbol $n$ by $p$, it is understood that all sums involved are over primes, thus for instance
$$ \E_{p\in A}^{\log} f(p)\coloneqq \frac{\sum_{p\in A} \frac{f(p)}{p}}{\sum_{p \in A} \frac{1}{p}}.$$
Strictly speaking, this average may be undefined if $A$ contains no primes, but in practice we will always be in a regime in which $A$ contains plenty of primes.

\section{The two key subtheorems}

Let $k$ be a natural number, and let $a_1,\dots,a_k,b_1,\dots,b_k$ be natural numbers.  All implied constants in asymptotic notation (and in assertions such as ``$X$ is sufficiently large depending on $Y$'' are henceforth allowed to depend on these quantities.  For any natural number $a$ and any $x \geq 1$, define the quantity
\begin{equation}\label{fax}
 f_x(a) \coloneqq \E_{n \leq x}^{\log} \lambda(a_1n+ab_1)\cdots \lambda(a_k n+ab_k).
\end{equation}
To prove Theorem \ref{theo_logchowla}, it will suffice to show that
\begin{equation}\label{fx0}
f_x(1) \ll \eps
\end{equation}
whenever $\eps>0$, $k$ is odd, and $x$ is sufficiently large depending on $\eps$ (and, by the preceding convention, on $k,a_1,\dots,a_k,b_1,\dots,b_k$).

To obtain \eqref{fx0}, we will rely crucially on the following approximate functional equation for $f_x$, which informally asserts that $f_x(ap) \approx (-1)^k f_x(a)$ for ``most'' $a$ and $p$:

\begin{theorem}[Approximate functional equation]\label{afe}  Let $k,a_1,\dots,a_k,b_1,\dots,b_k$ be natural numbers.  For any $0 < \eps < 1$, $x > 1$, and any natural number $a$, one has
\begin{equation}\label{mpm}
\E_{2^m< p \leq 2^{m+1}} |f_x(ap) - (-1)^k f_x(a)| \ll \eps 
\end{equation}
for all natural numbers $m \leq \log\log x$ outside of an exceptional set ${\mathcal M}$ with
\begin{equation}\label{mam}
 \sum_{m \in {\mathcal M}} \frac{1}{m} \ll a \eps^{-3},
\end{equation}
where the quantity $f_x(a)$ is defined in \eqref{fax}.
\end{theorem}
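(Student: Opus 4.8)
The plan is to prove the approximate functional equation via an \emph{entropy decrement argument}, following the strategy introduced in \cite{tao} but in the logarithmically averaged setting. The starting observation is the near multiplicativity of $\lambda$: since $\lambda(a_i n + a b_i) = -\lambda(p)\lambda(a_i n + a b_i) = -\lambda(a_i (pn) + (ap) b_i)$ whenever $p \nmid a_i n + a b_i$, replacing $n$ by $pn$ in the definition \eqref{fax} of $f_x(ap)$ and using $\lambda(p)^k = (-1)^k$ shows that, up to a correction coming from the overcounting of multiples of $p$ and from the truncation $n \le x$, one has $f_x(ap) \approx (-1)^k f_x(a)$. The error introduced by the truncation is $O(1/p)$ in logarithmic density and hence harmless; the serious error is that the push-forward of logarithmic measure on $[1,x]$ under $n \mapsto pn$ is not the same as logarithmic measure on $[1,x]$, but rather logarithmic measure restricted (and renormalized) to multiples of $p$. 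Quantitatively, one obtains for each $m$ and each prime $2^m < p \le 2^{m+1}$ an identity of the form $f_x(ap) = (-1)^k \,\E_{n\le x}^{\log}\big[ 1_{p \mid n}\,p\,\lambda(a_1 (n/p)+ab_1)\cdots\big] + O(1/p) + o_{x\to\infty}(1)$; what must be shown is that for most $m$, averaging this over $p \in (2^m,2^{m+1}]$ lets one replace the indicator weight $1_{p\mid n}\cdot p$ by $1$.

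The second and main step is the entropy decrement argument proper. Consider the random variable $\mathbf{n}$ drawn from $[1,x]$ with logarithmic weighting, and the tuple-valued random variable recording the residues of $a_1 \mathbf{n} + a b_1,\dots,a_k\mathbf{n}+a b_k$ modulo $p$ (or, following Radziwi\l\l's suggestion mentioned in the acknowledgments, modulo a semiprime $p q$) for $p$ in a dyadic block. The key point is that the sequence $(\lambda(a_i n + a b_i))_i$ is, by the near-multiplicativity above, approximately invariant under multiplication of $n$ by $p$; this forces a conditional-independence-type statement between the $\lambda$-pattern and the residue class of $\mathbf{n}$ mod $p$, \emph{unless} the mutual information between these is large. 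Summing a monotonicity/telescoping bound for the relevant entropies over dyadic scales $m \le \log\log x$ produces a total information budget of size $O(\log\log\log x)$ or so, which by pigeonholing must be small — of size $O(\eps)$ — for all $m$ outside an exceptional set $\mathcal M$ whose logarithmic size $\sum_{m\in\mathcal M} 1/m$ is controlled by $a\eps^{-3}$, the factor of $a$ arising because the shifts $ab_i$ scale with $a$ and the factor $\eps^{-3}$ from converting an $L^1$ information bound into the $L^1$ bound \eqref{mpm} via Pinsker/Markov. For the good $m$, the small mutual information lets us decouple: $\E_{2^m<p\le 2^{m+1}} \E_{n\le x}^{\log}[1_{p\mid n} p \cdot (\text{$\lambda$-pattern})] \approx \E_{2^m<p\le 2^{m+1}}\E_{n\le x}^{\log}[1_{p\mid n}p] \cdot \E_{n\le x}^{\log}[\text{$\lambda$-pattern}] + O(\eps)$, and since $\E_{n\le x}^{\log}[1_{p\mid n} p] = 1 + O(1/p) + o_{x\to\infty}(1)$, this yields $\E_{2^m<p\le 2^{m+1}} f_x(ap) \approx (-1)^k f_x(a) + O(\eps)$. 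Upgrading from the average over $p$ of the \emph{difference} to the average of the \emph{absolute value} of the difference requires applying the same decoupling to the second moment, i.e.\ to patterns indexed by two primes $p, p'$, which is exactly where the semiprime modulus enters; this is a standard but bookkeeping-heavy elaboration.

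The main obstacle, and the step I expect to require the most care, is making the entropy argument quantitatively match the stated bounds — in particular getting the exceptional set bound \eqref{mam} with the clean dependence $a\eps^{-3}$ and, crucially, with an error term in \eqref{mpm} that is uniform in $x$ once $x$ is large (the $o_{x\to\infty}(1)$ terms above must be absorbed correctly; note the theorem as stated has no $o_{x\to\infty}(1)$ in \eqref{mpm}, so one is implicitly asserting the bound for all $x>1$, which means the genuinely $x$-dependent errors have to be shown to already be $O(\eps)$ or to be swept into $\mathcal M$). Concretely: one must track how the entropy of the residue tuple mod $p$ behaves as $p$ ranges over a dyadic block — it is at most $k\log p \ll m$ — and show that the \emph{increments} of a suitably defined decreasing entropy functional across scales sum to something bounded, so that the set of scales where the increment (hence the mutual information, hence the left side of \eqref{mpm}) exceeds $\eps^{3}$ has the asserted small logarithmic measure. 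A secondary technical point is handling the coprimality conditions $p \nmid a_i n + ab_i$: the primes dividing some $a_i n + a b_i$ form a sparse set for each fixed $n$, but uniformity requires a short sieve/second-moment estimate, and this is also the source of the harmless $O(1/p)$ and $o_{x\to\infty}(1)$ corrections. None of the number theory here is deep — it is the careful orchestration of the entropy budget against the dyadic sum $\sum_{m\le\log\log x} 1/m \asymp \log\log\log x$ that is the crux.
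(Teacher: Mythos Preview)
Your overall strategy is the paper's: complete multiplicativity gives $f_x(a) - (-1)^k f_x(ap) = (-1)^k \E_{n\le x}^{\log}\,\lambda(a_1 n+apb_1)\cdots\lambda(a_k n+apb_k)(p1_{p|n}-1) + O(\eps)$, and one then runs an entropy decrement between the $\lambda$-window $\mathbf{X}_m$ and the residues $\mathbf{Y}_m = (\mathbf{n}\bmod p)_{2^m<p\le 2^{m+1}}$. But two points are off. First, the absolute value in \eqref{mpm} is \emph{not} handled via a second moment over pairs $p,p'$, and semiprimes play no role in this theorem (they appear elsewhere in the paper, in the comparison of prime and semiprime averages). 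The paper simply inserts signs $c_p = \operatorname{sgn}(\cdot)$ and bounds $\E_p c_p \E_n^{\log}[\cdots]$; the sign $c_p$ is absorbed harmlessly into the function $F_m$ whose expectation is controlled by mutual information. Your proposed second-moment route is not wrong in principle, but it is not needed and would complicate the bookkeeping.

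Second, and more importantly, the entropy budget you quote is too weak. The original argument in \cite{tao} indeed yields a total budget growing with $x$, but that would give $\sum_{m\in\mathcal M} 1/m \ll \eps^{-3}\log\log\log x$, not \eqref{mam}. What is needed is the sharpened telescoping from \cite{tt}: one shows
\[
\frac{1}{2^m}\mathbf{I}(\mathbf{X}_m:\mathbf{Y}_m\mid \mathbf{Y}_{<m}) \le \frac{1}{2^m}\mathbf{H}(\mathbf{X}_m\mid\mathbf{Y}_{<m}) - \frac{1}{2^{m+1}}\mathbf{H}(\mathbf{X}_{m+1}\mid\mathbf{Y}_{<m+1}) + (\text{negligible}),
\]
so that summing in $m$ gives $\sum_m 2^{-m}\mathbf{I}(\mathbf{X}_m:\mathbf{Y}_m\mid\mathbf{Y}_{<m}) \ll \mathbf{H}(\mathbf{X}_1) \ll a$, independent of $x$. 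The factor of $a$ you correctly anticipate enters here, because the window defining $\mathbf{X}_1$ has length $O(a)$. Placing into $\mathcal M$ those $m$ with $\mathbf{I}(\mathbf{X}_m:\mathbf{Y}_m\mid\mathbf{Y}_{<m}) > \eps^3 2^m/m$ then gives \eqref{mam} directly. Without this normalization-by-$2^m$ telescoping, your budget does not close, and your own penultimate paragraph is internally inconsistent: an $O(\log\log\log x)$ budget cannot yield an $x$-independent exceptional set. (Also, the coprimality worry in your last paragraph is spurious: $\lambda(pm)=-\lambda(m)$ holds unconditionally.)
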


Results similar to these appear in  \cite[Theorem 3.6]{fh2}, \cite[Theorem 3.6]{tt}.  As in these references, we will prove Theorem \ref{afe} in  Section \ref{afe-proof} via the entropy decrement argument introduced in \cite{tao}; we will use the modification of that argument in \cite{tt} to obtain the relatively strong bound \eqref{mam}.

From \eqref{mpm} we have
$$\E^{\log}_{2^m< p \leq 2^{m+1}} |f_x(ap) - (-1)^k f_x(a)| \ll \eps $$
(since $1/p$ is comparable to $1/2^m$ in the range $2^m < p \leq 2^{m+1}$), and hence from the triangle inequality we have
$$ f_x(a) = (-1)^k \E^{\log}_{2^m < p \leq 2^{m+1}} f_x(ap) + O(\eps) $$
for all $m$ with $2^m \leq (\log x)^{1/2}$ outside of the exceptional set ${\mathcal M}$.  The fact that the average on the right-hand side is over primes will be inconvenient for our argument.  To overcome this, we will establish the following comparison.

\begin{theorem}[Comparison]\label{compar}  Let $k,a_1,\dots,a_k,b_1,\dots,b_k$ be natural numbers.  Let $0 < \eps < 1$, and let
$$ 1 < w < H_- < H_+ < x $$
be parameters with $w$ be sufficiently large depending on $\eps$; $H_-$ sufficiently large depending on $w,\eps$; $H_+$ sufficiently large depending on $H_-, w, \eps$; and $x$ sufficiently large depending on $H_+, H_-, w, \eps$.  Set $W \coloneqq \prod_{p \leq w} p$.  Then, for any natural number $a \leq H_+$ and any $m$ with $H_- \leq 2^m \leq H_+$, one has
$$ \E^{\log}_{2^m < p \leq 2^{m+1}} f_x(ap) = \E^{\log}_{2^m < n \leq 2^{m+1}: (n,W)=1} f_x(an) + O(\eps)$$
where the quantity $f_x(a)$ is defined in \eqref{fax}.
\end{theorem}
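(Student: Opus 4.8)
The statement compares an average of $f_x(ap)$ over primes $p \in (2^m, 2^{m+1}]$ with an average of $f_x(an)$ over $W$-rough integers $n$ in the same dyadic block. The natural tool is the fact that the von Mangoldt function (and hence, after the usual $W$-trick, the indicator of primes) is Gowers-uniform, as established in \cite{gt-linear}, \cite{gt-mobius}, \cite{gtz}. The key observation is that $f_x(ap)$, as a function of the multiplier $p$, is essentially a correlation of shifts of $\lambda$ with the affine-linear forms $n \mapsto a_i n + a p b_i$; expanding the definition \eqref{fax} and swapping the order of summation, the quantity $\E^{\log}_{2^m < p \le 2^{m+1}} f_x(ap)$ becomes (up to normalization) a sum of the shape $\E^{\log}_{n \le x} \sum_{2^m < p \le 2^{m+1}} \frac{\log p}{\text{(density)}} \, \lambda(a_1 n + a p b_1) \cdots \lambda(a_k n + a p b_k)$, and similarly with $\Lambda$ or $1_{(n,W)=1}$ weights replaced by each other. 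So the content is: \emph{replacing the weight $\frac{\log p}{2^m}$ (suitably normalized, restricted to $(2^m,2^{m+1}]$) by the weight $\frac{\varphi(W)/W}{2^m} 1_{(n,W)=1}$ changes a multilinear average of $\lambda$'s by $O(\eps)$.}

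\textbf{Steps.} First, I would unfold the definitions on both sides and reduce to showing that
\[
\E^{\log}_{n \le x} \Big( \E_{2^m < h \le 2^{m+1}} (\Lambda_W(h) - 1)_{(h,W)=1} \prod_{i=1}^k \lambda(a_i n + a h b_i) \Big) = O(\eps),
\]
where $\Lambda_W$ denotes the appropriately $W$-tricked (so, nonnegative-mean, comparison-normalized) weight attached to primes in the dyadic range, and the bracketed difference is the ``Gowers-uniform part'' of the prime counting function. Because we have an \emph{odd} number $k$ of Liouville factors and $k$ affine forms $\ell_i(n,h) = a_i n + a b_i h$ whose $h$-coefficients $a b_i$ are all nonzero, after the standard reduction (inserting the $n$-average, rescaling) this is exactly the kind of average controlled by the inverse theorem for the Gowers $U^{k}$-norm: one decomposes $\Lambda_W - 1$ via the $U^{s+1}$-inverse theorem and uses that $\lambda$ correlates negligibly with nilsequences (the Möbius/Liouville orthogonality to nilsequences, \cite{gt-mobius}), or alternatively one uses directly that $\lambda$ has small $U^k$ norm in a logarithmically-averaged sense. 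Second, I would handle the finitely many pairs $i \ne j$ where the forms $\ell_i, \ell_j$ are affinely dependent in $(n,h)$; since $a_i b_j - a_j b_i$ need not be nonzero (we are in the odd case), such a pair collapses $\lambda(\ell_i)\lambda(\ell_j)$ to a lower-complexity object, and one removes it by an induction on $k$ (staying odd by removing pairs — but here the two forms collapse to a \emph{single} form times a constant, so parity is preserved) — actually the cleanest route is to note $f_x(ap)$ for degenerate configurations is itself controlled by an honest shorter Chowla average, so one reduces the number of genuinely independent forms before invoking Gowers uniformity. Third, a bookkeeping step: the hierarchy $w \ll H_- \ll H_+ \ll x$ is exactly what is needed so that (a) the $W$-trick error terms are $o_\eps(1)$, (b) the equidistribution/uniformity estimates, which come with losses polynomial in $W$ and exponential in the dyadic scale $2^m \le H_+$, are absorbed by taking $x$ large last, and (c) the range $2^m \in [H_-, H_+]$ is a genuine dyadic block of primes (so the prime number theorem in the block and the $W$-tricked density $\varphi(W)/W$ are both available with good error terms).

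\textbf{Main obstacle.} The crux is making the passage from ``$\lambda$ times a dyadically-restricted prime-detecting weight in one variable'' to ``$\lambda$ times (weight $-1$), which is $U^k$-small'' fully rigorous with the \emph{logarithmic} average in $n$ still present and uncontrolled in length. The Gowers-uniformity results for $\Lambda$ are stated for equidistributed families of linear forms on a box $[N]^d$; here the linear forms $\ell_i(n,h)$ mix a long variable $n \le x$ with a short variable $h \asymp 2^m$, so one must first fix $n$ (or fix $n$ up to a residue class mod $W$ and up to the relevant scale) and run the uniformity estimate purely in the $h$ variable on the interval $(2^m, 2^{m+1}]$, then average trivially (with the $1/n$ weight) over $n$ — the point being that for each fixed $n$ the $k$ forms $h \mapsto a_i n + a b_i h$ are pairwise affinely independent (distinct nonzero slopes $a b_i$, provided the $b_i$ are distinct; coincident $b_i$'s are the degenerate case handled in step two) and the $\lambda$-orthogonality-to-nilsequences input applies uniformly in $n$. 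Getting the quantifier order right in the $w, H_-, H_+, x$ hierarchy so that every error term is genuinely $O(\eps)$ — rather than $o(1)$ with an unspecified rate that might not beat $\eps$ — is the delicate part, and is precisely why the theorem is phrased with that nested sequence of ``sufficiently large'' conditions.
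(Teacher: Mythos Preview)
Your high-level plan---reduce to a correlation of $\lambda$'s against the weight $\Lambda_W - 1$ on the dyadic block $(2^m,2^{m+1}]$, then kill that correlation using Gowers-norm technology---is exactly right, and matches the paper. But your description of \emph{which} function's Gowers uniformity is being invoked is backwards, and this is a genuine gap.

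You write that one ``decomposes $\Lambda_W - 1$ via the $U^{s+1}$-inverse theorem and uses that $\lambda$ correlates negligibly with nilsequences'', or alternatively that ``$\lambda$ has small $U^k$ norm in a logarithmically-averaged sense''. Neither of these is what is used, and neither is available. After fixing $n$ (the outer logarithmic variable), the functions appearing are $h \mapsto \lambda(a_i n + a b_i h)$, i.e.\ $\lambda$ on dilated short intervals of length $\asymp 2^m$ around the points $a_i n$. Knowing that these are orthogonal to nilsequences, or have small $U^k$ norm, uniformly in $n \leq x$, is precisely the ``local Gowers uniformity of the Liouville function'' (Conjecture 1.6 of \cite{tao-higher}), which the introduction explicitly identifies as equivalent to the full logarithmic Chowla conjecture and as something the argument must \emph{avoid}. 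So your proposed input would be circular.

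The paper instead uses that $\|\Lambda_{b,W} - 1\|_{U^k[H]} = o_{w\to\infty}(1)$ (Lemma~\ref{le2}, the Green--Tao--Ziegler result), together with a generalised von Neumann inequality (Lemma~\ref{le1}) in which the $\lambda$-factors play the role of \emph{arbitrary} bounded functions $\phi_i$ and $\theta \coloneqq \Lambda_{b,W}-1$ is the function whose $U^k$-norm controls the average. No structural information about $\lambda$ whatsoever is used. To get the average into the two-variable form required by Lemma~\ref{le1}, one first shifts the outer variable $n$ by an auxiliary $n' \leq aHW$ and averages over $n'$; this creates a genuine box $[aHW] \times [H]$ in $(n',d)$ to which the Cauchy--Schwarz--Gowers argument applies, uniformly in the shift $n$. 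Consequently your Step~2 (handling degenerate pairs $a_i b_j = a_j b_i$ by induction) is unnecessary: Lemma~\ref{le1} requires no nondegeneracy hypothesis on the $a_i, b_i$, and your worry in the ``main obstacle'' paragraph about needing the slopes $a b_i$ to be distinct evaporates once the burden of uniformity is placed on $\Lambda_{b,W}-1$ rather than on $\lambda$.
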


We will prove this assertion in Section \ref{compar-proof}.  Our main tool will be the theory of the Gowers uniformity norms, and in particular the Gowers uniformity of the $W$-tricked von Mangoldt function proven in \cite{gt-linear}, \cite{gt-mobius}, \cite{gtz}.  In contrast to Theorem \ref{afe}, the bounds in Theorem \ref{compar} (particularly with regards to what ``sufficiently large'' means) are qualitative rather than quantitative; this is primarily due to the qualitative nature of the bounds currently available for the Gowers uniformity of the $W$-tricked von Mangoldt function.  A key technical point in the above theorem is that the parameter $a$ is permitted to be large compared to the parameter $w$ (or $W$); this will be important in the argument below.

In the remainder of this section we show how Theorem \ref{afe} and Theorem \ref{compar} yield \eqref{fx0} when $k$ is odd and $x$ is sufficiently large depending on $\eps$.

Fix $0 < \eps < 1/2$.  We will need parameters
\begin{equation}\label{params}
 \frac{1}{\eps} < w < H_1 < H_2 < H_3 < H_4 < x 
\end{equation}
with $w$ sufficiently large depending on $\eps$, each $H_i$ for $i=1,2,3,4$ sufficiently large depending on $w,\eps$ and $H_1,\dots,H_{i-1}$, and $x$ sufficiently large depending on $H_4,H_3,H_2,H_1,w,\eps$.

From Theorem \ref{afe} and the hypothesis that $k$ is odd, one has
$$ f_x(1) = -\E^{\log}_{2^m < p_1 \leq 2^{m+1}} f_x(p_1) + O(\eps)$$
for all $m$ in the range $H_1 \leq 2^m \leq H_2$, outside of an exceptional set ${\mathcal M}_1$ with
$$ \sum_{m \in {\mathcal M}_1} \frac{1}{m} \ll \eps^{-3}.$$
For $m$ in this exceptional set, we of course have
$$ f_x(1) = -\E^{\log}_{2^m < p_1 \leq 2^{m+1}} f_x(p_1) + O(1).$$
Averaging over all such $m$ and using the prime number theorem, we conclude (given the hypotheses on the parameters \eqref{params}) that
\begin{equation}\label{fx1}
 f_x(1) = -\E^{\log}_{H_1 < p_1 \leq H_2} f_x(p_1) + O(\eps).
\end{equation}
A similar application of Theorem \ref{afe} yields
\begin{equation}\label{fx2}
 f_x(1) = -\E^{\log}_{H_3 < p \leq H_4} f_x(p) + O(\eps).
\end{equation}
Also, applying Theorem \ref{afe} with $a$ replaced by $p_1$, we have
$$ f_x(p_1) = -\E^{\log}_{H_3 < p_2 \leq H_4} f_x(p_1 p_2) + O(\eps)$$
for all primes $p_1$ with $H_1 < p_1 \leq H_2$; inserting this into \eqref{fx1}, we obtain
\begin{equation}\label{fx3}
 f_x(1) = +\E^{\log}_{H_1 < p_1 \leq H_2} \E^{\log}_{H_3 < p_2 \leq H_4} f_x(p_1 p_2) + O(\eps).
\end{equation}
Crucially, the sign in \eqref{fx3} is the opposite of the sign in \eqref{fx2}.  To conclude the proof of \eqref{fx0} from \eqref{fx2}, \eqref{fx3}, it will suffice to show that the average \eqref{fx2} involving primes $p$ and the average \eqref{fx3} involving semiprimes $p_1 p_2$ are comparable in the sense that
\begin{equation}\label{fx4}
\E^{\log}_{H_3 < p \leq H_4} f_x(p) = \E^{\log}_{H_1 < p_1 \leq H_2} \E^{\log}_{H_3 < p_2 \leq H_4} f_x(p_1 p_2) + O(\eps).
\end{equation}
To do this, we use Theorem \ref{compar} several times.  Firstly, from this theorem we see that
$$
 \E^{\log}_{2^m < p \leq 2^{m+1}} f_x(p) =  \E^{\log}_{2^m < n \leq 2^{m+1}: (n,W) = 1} f_x(n) + O(\eps)
$$
whenever $H_3 \leq 2^m \leq H_4$; averaging over $m$ (and noting that the error terms that arise can be easily absorbed into the $O(\eps)$ error) we conclude that
$$ \E^{\log}_{H_3 < p \leq H_4} f_x(p) = \E^{\log}_{H_3 < n \leq H_4: (n,W)=1} f_x(n) + O(\eps).$$
Similarly, we have
$$ \E^{\log}_{H_3 < p_2 \leq H_4} f_x(p_1 p_2)  = \E^{\log}_{H_3 < n_2 \leq H_4: (n_2,W)=1} f_x(p_1 n_2) + O(\eps)$$
whenever $H_1 <p_1 \leq H_2$ (note that this is despite $p_1$ being large compared with $w$ or $W$).
Thus it will suffice to show that
\begin{equation}\label{fx5}
\E^{\log}_{H_3 < n \leq H_4: (n,W)=1} f_x(n) = \E^{\log}_{H_1 < p_1 \leq H_2} \E^{\log}_{H_3 < n_2 \leq H_4: (n_2,W)=1} f_x(p_1 n_2) + O(\eps).
\end{equation}
By making the change of variables $n = p_1 n_2$, and noting that $n$ is coprime to $W$ if and only if $n_2$ is, we can write
$$
\E^{\log}_{H_3 < n_2 \leq H_4: (n_2,W)=1} f_x(p_1 n_2) = \E^{\log}_{p_1 H_3 < n \leq p_1 H_4: (n,W)=1} f_x(n) p_1 1_{p_1|n} + O(\eps),$$
and one can modify the range $p_1 H_3 < n \leq p_1 H_4$ to $H_3 < n \leq H_4$ incurring a further error of $O(\eps)$.  We may thus rearrange \eqref{fx5} as
$$
\E^{\log}_{H_3 < n \leq H_4: (n,W)=1} f_x(n) (g(n)-1) = O(\eps)$$
where $g$ is the weight
$$ g(n) \coloneqq \E^{\log}_{H_1 < p_1 \leq H_2} p_1 1_{p_1|n}.$$
By the Cauchy-Schwarz inequality and the boundedness of $f_x$, it thus suffices to establish the bound
$$
\E^{\log}_{H_3 < n \leq H_4: (n,W)=1} (g(n)-1)^2 \ll \eps^2$$
which will follow in turn from the bounds
\begin{equation}\label{g1}
\E^{\log}_{H_3 < n \leq H_4: (n,W)=1} g(n) = 1 + O(\eps^2)
\end{equation}
and
\begin{equation}\label{g2}
\E^{\log}_{H_3 < n \leq H_4: (n,W)=1} g(n)^2 = 1 + O(\eps^2).
\end{equation}
The left-hand side of \eqref{g1} can be rewritten as
$$ \E^{\log}_{H_1 < p_1 \leq H_2} p_1 \E^{\log}_{H_3 < n \leq H_4: (n,W)=1} 1_{p_1|n} $$
and the claim \eqref{g1} follows since one can easily compute that
$$ \E^{\log}_{H_3 < n \leq H_4: (n,W)=1} 1_{p_1|n} = \frac{1 + O(\eps^2)}{p_1}.$$
Similarly, the left-hand side of \eqref{g2} can be rewritten as
$$ \E^{\log}_{H_1 < p_1 \leq H_2} \E^{\log}_{H_1 < p'_1 \leq H_2}  p_1 p'_1 \E^{\log}_{H_3 < n \leq H_4: (n,W)=1} 1_{p_1,p'_1|n} $$
and the claim \eqref{g2} follows since $\E^{\log}_{H_3 < n \leq H_4: (n,W)=1} 1_{p_1,p'_1|n}$ is equal to $\frac{1+O(\eps^2)}{p_1 p'_1}$ when $p_1 \neq p'_1$, and can be bounded crudely by $O(1/p_1)$ when $p_1=p'_1$.  This concludes the proof of Theorem \ref{theo_logchowla}, except for the proofs of Theorem \ref{afe} and Theorem \ref{compar} which will be accomplished in the next two sections respectively.

\section{Using the entropy decrement argument}\label{afe-proof}

We now prove Theorem \ref{afe}.  Let $k,a_1,\dots,a_k,b_1,\dots,b_k,\eps,a,x$ be as in that theorem.  We may assume that
\begin{equation}\label{xam}
 x \geq \exp\exp\exp( a \eps^{-3} )
\end{equation}
since otherwise the claim is trivial by setting ${\mathcal M}$ to consist of all $m \leq \log \log x$.  We may also restrict attention to proving
\eqref{mpm} for $m$ satisfying
\begin{equation}\label{msm}
\exp(a \eps^{-3}) \leq m \leq \frac{1}{100} \log\log x
\end{equation}
since all the $m$ between $\frac{1}{100} \log\log x$ and $\log\log x$, or less than $\exp(a \eps^{-3})$, can be placed in the exceptional set ${\mathcal M}$ without significantly affecting \eqref{mam}.  Finally, we can assume that $\eps \leq 1/2$, since for $1/2 < \eps \leq 1$ the bound \eqref{mpm} holds from the triangle inequality.

For any prime $p$, one has the identity
$$ \lambda(n) = - \lambda(pn)$$
for any natural number $n$, and hence
$$ \lambda(a_1n+ab_1)\cdots \lambda(a_k n+ab_k) = (-1)^k \lambda(a_1pn+apb_1)\cdots \lambda(a_k n+apb_k).$$
From \eqref{fax} we thus have
$$
 f_x(a) = (-1)^k \E_{n \leq x}^{\log} \lambda(a_1pn+apb_1)\cdots \lambda(a_kpn+apb_k).$$
If $p \leq \log x$, then (using \eqref{xam}) we have $\sum_{x < n \leq px} \frac{1}{n} \ll \eps \sum_{n \leq x} \frac{1}{n}$, and hence\footnote{Here it is essential that we are using logarithmic averaging; the argument breaks down completely at this point if one uses ordinary averaging.} that
$$ \E_{n \leq px}^{\log} g(n) = \E_{n \leq x}^{\log} g(n) + O(\eps)$$
whenever $g: {\mathbb N} \to \C$ is bounded in magnitude by $1$.  Thus we have
$$ f_x(a) = (-1)^k \E_{n \leq px}^{\log} \lambda(a_1pn+apb_1)\cdots \lambda(a_kpn+apb_k) + O(\eps)$$
for all $p\leq \log x$. Making the change of variables $n' \coloneqq pn$, we conclude that
$$
 f_x(a) = (-1)^k \E_{n' \leq x}^{\log} \lambda(a_1n'+apb_1)\cdots \lambda(a_kn'+apb_k) p 1_{p|n'} + O(\eps).$$
 Replacing $n'$ with $n$, and comparing with \eqref{fax} with $a$ replaced by $ap$, we conclude that
$$ f_x(a) - (-1)^k f_x(ap) = (-1)^k \E_{n \leq x}^{\log} \lambda(a_1n+apb_1)\cdots \lambda(a_kn+apb_k) (p 1_{p|n}-1) + O(\eps).$$
The contribution of those $n$ with $n \leq x^\eps$ is $O(\eps)$, so we have
$$ f_x(a) - (-1)^k f_x(ap) = (-1)^k \E_{x^\eps < n \leq x}^{\log} \lambda(a_1n+apb_1)\cdots \lambda(a_kn+apb_k) (p 1_{p|n}-1) + O(\eps)$$
for all $p\leq \log x$. If we set $c_p \in \{-1,0,+1\}$ to be the signum of $\E_{n \leq x}^{\log} \lambda(a_1n+apb_1)\cdots \lambda(a_kn+apb_k) (p 1_{p|n}-1)$, it will thus suffice to show that
\begin{equation}\label{sa}
\E_{2^m < p \leq 2^{m+1}} c_p \E_{x^\eps < n \leq x}^{\log} \lambda(a_1n+apb_1)\cdots \lambda(a_kn+apb_k) (p 1_{p|n}-1) = O(\eps) 
\end{equation}
for all $m$ obeying \eqref{msm}, outside of an exceptional set ${\mathcal M}$ obeying \eqref{mam}.

Let $m$ obey \eqref{msm}. If $j$ is a natural number less than or equal to $2^m$ (and hence of size $O( \log^{1/10} x )$), one easily computes the total variation bound
$$ \sum_{x^\eps < n \leq x^\eps + j} \frac{1}{n} + \sum_{x^\eps + j < n \leq x+j} \left|\frac{1}{n} - \frac{1}{n+j}\right| \ll \frac{\log^{1/10} x}{x^\eps} $$
and thus
$$ \E_{x^{\eps}\leq n \leq x}^{\log} g(n) = \E_{x^{\eps}\leq n \leq x}^{\log} g(n+j) + O\left(\frac{\log^{1/10} x}{x^\eps \log x}\right)$$
for any function $g: \N \to \C$ bounded in magnitude by $1$.  By \eqref{xam}, the error term is certainly of size $O(\eps)$.
In particular, the left-hand side of \eqref{sa} can be written as
$$
\E_{2^m \leq p \leq 2^{m+1}} c_p \E_{x^\eps < n \leq x}^{\log} \lambda(a_1n+a_1 j + apb_1)\cdots \lambda(a_kn+a_kj + apb_k) (p 1_{p|n+j}-1) + O(\eps) $$
for any $1 \leq j \leq 2^m$.  Averaging in $j$ and rearranging, we can thus write the left-hand side of \eqref{sa} in probabilistic language\footnote{We will use boldface symbols such as $\mathbf{n}, \mathbf{X}_m, \mathbf{Y}_m, \mathbf{Z}_m$ to denote random variables, with non-boldface symbols such as $X_m$ being used to denote deterministic variables instead.} as
$$
\mathbf{E} \mathbf{Z}_m + O(\eps),$$
where $\mathbf{E}$ denotes expectation, $\mathbf{Z}_m$ is the random variable
$$
\mathbf{Z}_m \coloneqq \E_{2^m < p \leq 2^{m+1}} \E_{j \leq 2^m} c_p \lambda(a_1\mathbf{n}+a_1 j + apb_1)\cdots \lambda(a_k\mathbf{n}+a_kj + apb_k) (p 1_{p|\mathbf{n}+j}-1),$$
and $\mathbf{n}$ is a random natural number in the interval $(x^\eps,x]$ drawn using the logarithmic distribution
$$ \mathbf{P}( \mathbf{n} = n ) = \frac{1/n}{\sum_{x^\eps < n' \leq x} \frac{1}{n'}}$$
for all $x^\eps < n \leq x$.

We now ``factor'' the random variable $\mathbf{Z}_m$ into a function of two other random variables $\mathbf{X}_m, \mathbf{Y}_m$, defined as follows. Let $B:=\max_{i}b_i$ and
$$ C \coloneqq \sum_{i=1}^k (2aB+1) a_i, $$
and let $\mathbf{X}_m \in \{-1,+1\}^{C 2^m}$ and $\mathbf{Y}_m \in \prod_{2^m < p \leq 2^{m+1}} \Z/p\Z$ be the random variables
$$ \mathbf{X}_m \coloneqq ( \lambda( a_i \mathbf{n} + r ) )_{1 \leq i \leq k; 1 \leq r \leq (2aB+1) a_i 2^m} $$
and
$$ \mathbf{Y}_m \coloneqq ( \mathbf{n} \hbox{ mod } p )_{2^m < p \leq 2^{m+1}}.$$
Then we may write $\mathbf{Z}_m = F_m( \mathbf{X}_m, \mathbf{Y}_m )$, where $F_m: \{-1,+1\}^{C 2^m} \times \prod_{2^m < p \leq 2^{m+1}} \Z/p\Z \to \R$ is the function defined by
$$ 
F_m ( (b_{i,r})_{1 \leq i \leq k; 1 \leq r \leq (2aB+1) a_i 2^m}, (n_p)_{2^m < p \leq 2^{m+1}} )
\coloneqq
\E_{2^m < p \leq 2^{m+1}} \E_{j \leq 2^m} c_p b_{1,a_1 j + apb_1} \cdots b_{k,a_k j + apb_k} (p 1_{p|n_p+j}-1).$$
for all $b_{i,r} \in \{-1,+1\}$ and $n_p \in \Z/p\Z$.  It will now suffice to show that
$$ \mathbf{E} F_m( \mathbf{X}_m, \mathbf{Y}_m ) = O(\eps) $$
for all $m$ obeying \eqref{msm}, outside of an exceptional set $\mathcal{M}$ obeying \eqref{mam}.

At this point we recall some information-theoretic concepts:

\begin{definition}[Entropy and conditional expectation]  Let $\mathbf{X}, \mathbf{Y}, \mathbf{Z}$ be random variables taking finitely many values.  Then we have the entropy
$$ \mathbf{H}(\mathbf{X}) \coloneqq \sum_x \mathbf{P}( \mathbf{X} = x ) \log \frac{1}{\mathbf{P}( \mathbf{X} = x )}$$
where the sum is over all $x$ for which $\mathbf{P}(\mathbf{X} = x)\neq 0$.  Similarly we have the conditional entropy
$$ \mathbf{H}(\mathbf{X}|E) \coloneqq \sum_x \mathbf{P}( \mathbf{X} = x |E) \log \frac{1}{\mathbf{P}( \mathbf{X} = x |E)}$$
for any event $E$ of positive probability, and
$$ \mathbf{H}(\mathbf{X}|\mathbf{Y}) \coloneqq \sum_y \mathbf{P}(\mathbf{Y} = y) \mathbf{H}( \mathbf{X} | \mathbf{Y} = y ).$$
Finally, we define the mutual information
$$ \mathbf{I}(\mathbf{X} : \mathbf{Y}) = \mathbf{H}(\mathbf{X}) - \mathbf{H}(\mathbf{X}|\mathbf{Y}) = \mathbf{H}(\mathbf{Y}) - \mathbf{H}(\mathbf{Y}|\mathbf{X}),$$
and similarly define the conditional mutual information
$$ \mathbf{I}(\mathbf{X} : \mathbf{Y}|\mathbf{Z}) = \mathbf{H}(\mathbf{X}|\mathbf{Z}) - \mathbf{H}(\mathbf{X}|\mathbf{Y},\mathbf{Z}) = \mathbf{H}(\mathbf{Y}|\mathbf{Z}) - \mathbf{H}(\mathbf{Y}|\mathbf{X},\mathbf{Z}).$$
\end{definition}

For each $m$ obeying \eqref{msm}, let $\mathbf{Y}_{<m}$ be the random variable $\mathbf{Y}_{<m} \coloneqq (\mathbf{Y}_{m'})_{m' < m}$.
  We can control the expectation $\mathbf{E} F_m( \mathbf{X}_m, \mathbf{Y}_m )$ by the conditional mutual information $\mathbf{I}( \mathbf{X}_m : \mathbf{Y}_m | \mathbf{Y}_{<m} )$ as follows: 

\begin{proposition}  Suppose $m$ obeys \eqref{msm} and is such that
\begin{equation}\label{lun}
 \mathbf{I}( \mathbf{X}_m : \mathbf{Y}_m | \mathbf{Y}_{<m} ) \leq \eps^3 \frac{2^m}{m}.
\end{equation}
Then one has
$$ \mathbf{E} F_m( \mathbf{X}_m, \mathbf{Y}_m ) \ll \eps.$$
\end{proposition}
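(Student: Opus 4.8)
The plan is to exploit the product structure of $F_m$ in the $\mathbf{Y}_m$ variable. Writing $\mathbf{Y}_m = (\mathbf{n}\bmod p)_{2^m < p \le 2^{m+1}}$, the function $F_m$ is a finite average over $p$ of quantities $G_{m,p}(\mathbf{X}_m, \mathbf{n}\bmod p)$ each of which sees only the single coordinate $\mathbf{n}\bmod p$ of $\mathbf{Y}_m$, namely
$$ G_{m,p}\big((b_{i,r}), n_p\big) \coloneqq \E_{j \le 2^m} c_p\, b_{1,a_1 j+a p b_1}\cdots b_{k,a_k j+a p b_k}\,(p 1_{p|n_p+j}-1). $$
Since $p > 2^m$, at most one $j \in \{1,\dots,2^m\}$ satisfies $p\mid n_p+j$, so $\E_{j\le 2^m}|p 1_{p|n_p+j}-1| \le p 2^{-m}+1 \le 3$, giving $\|G_{m,p}\|_\infty \le 3$ and hence $\|F_m\|_\infty \le 3$. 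Taking expectations over $(\mathbf{X}_m,\mathbf{Y}_m)$ and using linearity, $\mathbf{E}\,F_m(\mathbf{X}_m,\mathbf{Y}_m) = \E_{2^m < p \le 2^{m+1}}\mathbf{E}\,G_{m,p}(\mathbf{X}_m, \mathbf{n}\bmod p)$, so it suffices to bound each summand on average over $p$.

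Fix $p \in (2^m, 2^{m+1}]$ and let $\mathbf{W}$ denote the tuple consisting of $\mathbf{Y}_{<m}$ together with the coordinates $\mathbf{n}\bmod p'$ of $\mathbf{Y}_m$ with $2^m < p' < p$; thus $\mathbf{W}$ determines $\mathbf{n}$ modulo a product $Q$ of distinct primes with $\log Q \le \psi(2^{m+1}) = O(2^m)$, which is at most $x^{\eps/10}$ by \eqref{msm} and \eqref{xam}. First, because $\mathbf{n}$ is drawn log-uniformly from $(x^\eps, x]$, $Q$ is coprime to $p$, and $Q p \le x^{\eps/5} \ll x^\eps$, the conditional law of $\mathbf{n}\bmod p$ given any value $w$ of $\mathbf{W}$ occurring with positive probability is within $O(x^{-\eps/2})$ of uniform on $\Z/p\Z$ in total variation. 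Consequently, if in $\mathbf{E}[G_{m,p}(\mathbf{X}_m,\mathbf{n}\bmod p)|\mathbf{W}=w]$ we replace the joint conditional law of $(\mathbf{X}_m,\mathbf{n}\bmod p)$ by the product of its two marginals, the result equals $\E_{j\le 2^m} c_p\,\mathbf{E}[b_{1,a_1 j+a p b_1}\cdots b_{k,a_k j+a p b_k}|w]\,\big(p\,\mathbf{P}(\mathbf{n}\equiv -j \bmod p \,|\, w)-1\big) = O(p\,x^{-\eps/2}) = O(x^{-\eps/3})$, the $b$-expectation being bounded by $1$. The cost of this replacement is controlled by Pinsker's inequality applied under the conditional measure (where the relevant relative entropy is exactly $\mathbf{I}(\mathbf{X}_m:\mathbf{n}\bmod p|\mathbf{W}=w)$); averaging over $w$, using $\|G_{m,p}\|_\infty\le 3$, and using concavity of the square root, we get
$$ \big|\mathbf{E}\,G_{m,p}(\mathbf{X}_m,\mathbf{n}\bmod p)\big| \ll x^{-\eps/3} + \mathbf{E}_{\mathbf{W}}\sqrt{\tfrac{1}{2}\mathbf{I}(\mathbf{X}_m:\mathbf{n}\bmod p|\mathbf{W}=w)} \ll x^{-\eps/3} + \sqrt{\mathbf{I}(\mathbf{X}_m:\mathbf{n}\bmod p|\mathbf{W})}. $$

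Finally, I would sum over $p$ and invoke the chain rule for mutual information: $\sum_{2^m<p\le 2^{m+1}}\mathbf{I}(\mathbf{X}_m:\mathbf{n}\bmod p|\mathbf{W}) = \mathbf{I}(\mathbf{X}_m:\mathbf{Y}_m|\mathbf{Y}_{<m}) \le \eps^3 2^m/m$ by the hypothesis \eqref{lun}. Since $m$ is large by \eqref{msm}, the prime number theorem gives $\#\{2^m<p\le 2^{m+1}\} = (1+o(1))2^m/(m\log 2)$, so the average of $\mathbf{I}(\mathbf{X}_m:\mathbf{n}\bmod p|\mathbf{W})$ over these primes is $O(\eps^3)$; Cauchy–Schwarz then yields $\mathbf{E}\,F_m(\mathbf{X}_m,\mathbf{Y}_m) = \E_{2^m<p\le 2^{m+1}}\mathbf{E}\,G_{m,p}(\mathbf{X}_m,\mathbf{n}\bmod p) \ll x^{-\eps/3} + \sqrt{\E_{2^m<p\le 2^{m+1}}\mathbf{I}(\mathbf{X}_m:\mathbf{n}\bmod p|\mathbf{W})} \ll x^{-\eps/3} + \eps^{3/2} \ll \eps$, using $\eps\le 1/2$ and \eqref{xam}. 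The one delicate point is precisely this last manoeuvre: $\mathbf{I}(\mathbf{X}_m:\mathbf{Y}_m|\mathbf{Y}_{<m})$ is itself of order $\eps^3 2^m/m$, comparable to $\mathbf{H}(\mathbf{Y}_m)$, so a direct application of Pinsker to the whole vector $\mathbf{Y}_m$ would be useless; the chain rule is what spreads this budget evenly over the $\asymp 2^m/(m\log 2)$ primes in the dyadic block, making each coordinate $\mathbf{n}\bmod p$ only $O(\eps^3)$-correlated with $\mathbf{X}_m$. Everything else — the near-uniformity of $\mathbf{n}$ to small moduli and the boundedness of $F_m$ — is routine.
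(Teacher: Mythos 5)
Your proof is correct, but it takes a genuinely different route from the paper's. The paper fixes the sign pattern $X_m$, uses Hoeffding's inequality to show that $F_m(X_m,\mathbf{U}_m)$ concentrates at scale $\exp(-c\eps^2 2^m/m)$ when $\mathbf{U}_m$ is uniform, and then applies a bespoke Pinsker-type large-deviation inequality (\cite[Lemma 3.4]{tt}, cf.\ \cite[Lemma 3.3]{tao}) that converts smallness of a probability under the uniform law into smallness under any law whose entropy deficit $\mathbf{H}(\mathbf{U}_m)-\mathbf{H}(\mathbf{Y})$ is controlled; this lets an entropy deficit as large as $\eps^3 2^m/m$ be tolerated in one stroke. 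You instead decompose $F_m$ into the single-prime pieces $G_{m,p}$, use the chain rule for conditional mutual information to split the hypothesis budget $\mathbf{I}(\mathbf{X}_m:\mathbf{Y}_m|\mathbf{Y}_{<m})\le \eps^3 2^m/m$ into per-prime pieces $\mathbf{I}(\mathbf{X}_m:\mathbf{n}\bmod p\,|\,\mathbf{W})$ of average size $O(\eps^3)$, and then apply the ordinary Pinsker inequality to each piece, with the near-uniformity of $\mathbf{n}\bmod p$ (which the paper also uses, at the entropy level rather than the total-variation level) guaranteeing that the independent version of $\mathbf{E}G_{m,p}$ is negligible. Your version has the advantage of needing only the textbook Pinsker inequality and the chain rule, replacing the Hoeffding-plus-custom-lemma combination with a per-coordinate argument; its slight cost is that you land on $\eps^{3/2}$ rather than $\eps$ (harmless since $\eps\le 1/2$), and you bound $|\mathbf{E}F_m|$ rather than $\mathbf{E}|F_m|$ (which is still exactly what the application requires, since the signs $c_p$ are chosen as signa). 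Your observation that a direct Pinsker applied to the whole vector $\mathbf{Y}_m$ would fail — so that \emph{something}, either Hoeffding or the chain rule, must spread the mutual-information budget across the $\asymp 2^m/m$ primes — is the right conceptual diagnosis of why the argument works. Two small cosmetic points: the bound you want in the near-uniformity step is $O((pQ)^2/x^\eps)$ on the total variation, which with $pQ\le x^{\eps/5}$ gives $O(x^{-3\eps/5})$; and the final averaging step is really Jensen (concavity of $\sqrt{\cdot}$), which you also invoke earlier, rather than Cauchy--Schwarz, though the two give identical conclusions here.
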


\begin{proof}  We argue as in \cite{tt}, which are in turn a modification of the arguments in \cite{tao}.  Let $\mathbf{U}_m$ be drawn uniformly at random from $\prod_{2^m < p \leq 2^{m+1}} \Z/p\Z$.  We first show that for any sign pattern $X_m \in \{-1,+1\}^{C 2^m}$, one has
\begin{equation}\label{pa}
 \mathbf{P}( |F_m( X_m, \mathbf{U}_m )| \geq \eps ) \ll \exp( - c \eps^2 2^m / m ) 
\end{equation}
for an absolute constant $c>0$.  If we write $\mathbf{U}_m = (\mathbf{n}_p)_{2^m < p \leq 2^{m+1}}$, then the $\mathbf{n}_p$ are jointly independent in $p$ and uniformly distributed on $\Z/p\Z$.  If $X_m = (b_{i,r})_{1 \leq i \leq k; 1 \leq r \leq (2aB+1)a_i 2^m}$, then one can write
$$ F_m( X_m, \mathbf{U}_m ) = \E_{2^m < p \leq 2^{m+1}} \mathbf{W}_p$$
where $\mathbf{W}_p$ is the random variable
$$ \mathbf{W}_p \coloneqq \E_{j \leq 2^m} c_p b_{1,a_1 j + apb_1} \cdots b_{k,a_k j + apb_k} (p 1_{p|\mathbf{n}_p+j}-1).$$
Observe that the $\mathbf{W}_p$ are jointly independent, bounded in magnitude by $O(1)$, and have mean zero.  The claim \eqref{pa} now follows from Hoeffding's inequality \cite{hoeff}.

Applying the Pinsker-type inequality from \cite[Lemma 3.4]{tt} (see also \cite[Lemma 3.3]{tao}), we conclude that
$$
 \mathbf{P}( |F_m( X_m, \mathbf{Y} )| \geq \eps ) \ll \frac{m}{\eps^2 2^m} ( \mathbf{H}(\mathbf{U}_m) - \mathbf{H}(\mathbf{Y}) + 1 )$$
for any random variable $\mathbf{Y}$ taking values in $(\mathbf{n}_p)_{2^m < p \leq 2^{m+1}}$; in particular, applying this to the probability measure $\mathbf{P}'(E):=\mathbf{P}(E|\mathbf{X}_m = X_m, \mathbf{Y}_{<m} = Y_{<m})$, we have
$$
 \mathbf{P}( |F_m( \mathbf{X}_m, \mathbf{Y}_m )| \geq \eps | \mathbf{X}_m = X_m, \mathbf{Y}_{<m} = Y_{<m} ) \ll \frac{m}{\eps^2 2^m} ( \mathbf{H}(\mathbf{U}_m) - \mathbf{H}(\mathbf{Y}_m|\mathbf{X}_m = X_m, \mathbf{Y}_{<m} = Y_{<m}) + 1 ) .$$
Averaging over $X_m,Y_{<m}$, we conclude that
$$
 \mathbf{P}( |F_m( \mathbf{X}_m, \mathbf{Y}_m )| \geq \eps ) \ll \frac{m}{\eps^2 2^m} ( \mathbf{H}(\mathbf{U}_m) - \mathbf{H}(\mathbf{Y}_m|\mathbf{X}_m, \mathbf{Y}_{<m}) + 1 ) ,$$
and hence (since $F_m$ is bounded by $O(1)$, and $m$ is large compared to $1/\eps$)
$$
 \mathbf{E} |F_m( \mathbf{X}_m, \mathbf{Y}_m )| \ll \frac{m}{\eps^2 2^m} ( \mathbf{H}(\mathbf{U}_m) - \mathbf{H}(\mathbf{Y}_m|\mathbf{X}_m, \mathbf{Y}_{<m}) ) + \eps.$$
We can write
$$\mathbf{H}(\mathbf{Y}_m|\mathbf{X}_m, \mathbf{Y}_{<m}) = \mathbf{H}(\mathbf{Y}_m|\mathbf{Y}_{<m}) - \mathbf{I}( \mathbf{X}_m : \mathbf{Y}_m | \mathbf{Y}_{<m} ) $$
and hence by \eqref{lun} we have
\begin{equation}\label{do}
 \mathbf{E} |F_m( \mathbf{X}_m, \mathbf{Y}_m )| \ll \frac{m}{\eps^2 2^m} ( \mathbf{H}(\mathbf{U}_m) - \mathbf{H}(\mathbf{Y}_m|\mathbf{Y}_{<m}) ) + \eps.
\end{equation}
Uniformly for $1\leq b\leq q\leq x^{\varepsilon}$, we have the simple estimate
$$
\sum_{\substack{x^{\eps}\leq n\leq x\\n\equiv b \pmod{q}}}\frac{1}{n}=\left(\frac{1}{q}+O\left(\frac{q}{x^{\eps}}\right)\right)\sum_{x^{\eps}\leq n\leq x} \frac{1}{n},
$$
so from the Chinese remainder theorem (and the prime number theorem), we see that the random variable $\mathbf{Y}_m$, after conditioning to any event of the form $\mathbf{Y}_{<m} = Y_{<m}$, is almost uniformly distributed in the sense that
\begin{equation}\label{eqq27}\mathbf{P}( \mathbf{Y}_m = Y_m | \mathbf{Y}_{<m} = Y_{<m} ) = \frac{1}{\prod_{2^m < p \leq 2^{m+1}} p} + O\left( \frac{\exp( O(2^m) )}{x^{\eps}} \right).
\end{equation}
We have for any distinct $x,y\in (0,1]$ the elementary inequality\footnote{Assuming by symmetry that $y>x$, and writing $y=x+\delta$ with $\delta=|y-x|$, the inequality follows from the mean value theorem applied to $x\mapsto (x+\delta)\log\frac{1}{x+\delta}$.}
$$
\left|x\log \frac{1}{x}-y\log \frac{1}{y}\right|\leq C|x-y|\log \frac{2}{|x-y|}\leq 2C|x-y|^{\frac{1}{2}}
$$
for some constant $C>0$, so if $\mathbf{X}$ and $\mathbf{X}'$ are any random variables having the same finite range $\mathcal{X}$, then we can compare their entropies by
\begin{equation}\label{eqq28}
|\mathbf{H}(\mathbf{X})-\mathbf{H}(\mathbf{X}')|\leq 2C\cdot \max_{x\in \mathcal{X}}|\mathbf{P}(\mathbf{X}=x)-\mathbf{P}(\mathbf{X}'=x)|^{\frac{1}{2}}\cdot |\mathcal{X}|.
\end{equation}
From this and \eqref{eqq27} we compute that
$$ \mathbf{H}(\mathbf{U}_m) - \mathbf{H}(\mathbf{Y}_m|\mathbf{Y}_{<m}) \ll \frac{\exp( O(2^m) )}{x^{\eps/2}}.$$
Inserting this into \eqref{do} and using \eqref{xam}, \eqref{msm} we conclude that
$$
 \mathbf{E} |F_m( \mathbf{X}_m, \mathbf{Y}_m )| \ll \eps
$$
as required.
\end{proof}

Theorem \ref{afe} now follows from the preceding proposition and the following estimate.

\begin{proposition}[Entropy decrement argument]  One has
$$ \sum_{\exp(a \eps^{-3}) \leq m \leq \frac{1}{100} \log\log x} \frac{1}{2^m} \mathbf{I}( \mathbf{X}_m : \mathbf{Y}_m | \mathbf{Y}_{<m} ) \ll a.$$
\end{proposition}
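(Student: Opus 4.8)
The plan is to run an entropy decrement on the normalised conditional entropies
\[
u_m \coloneqq \frac{1}{2^m}\,\mathbf{H}(\mathbf{X}_m \mid \mathbf{Y}_{<m}).
\]
Since $\mathbf{X}_m$ takes values in $\{-1,+1\}^{C2^m}$ we have $\mathbf{H}(\mathbf{X}_m \mid \mathbf{Y}_{<m}) \le \mathbf{H}(\mathbf{X}_m) \le C2^m\log 2$, and as $C = \sum_{i=1}^{k}(2aB+1)a_i = O(a)$, all the $u_m$ lie in the fixed interval $[0,C\log 2] \subseteq [0,O(a)]$. Two structural features of the $\mathbf{X}_m$ drive the argument. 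First, $\mathbf{X}_m$ is literally a sub-tuple of $\mathbf{X}_{m+1}$, so that $\mathbf{X}_{m+1} = (\mathbf{X}_m,\mathbf{X}'_m)$ with $\mathbf{X}'_m \coloneqq (\lambda(a_i\mathbf{n}+r))_{1\le i\le k;\ (2aB+1)a_i 2^m < r \le (2aB+1)a_i 2^{m+1}}$. Second, putting $s \coloneqq (2aB+1)2^m$ and substituting $r = (2aB+1)a_i 2^m + r'$ gives $a_i\mathbf{n}+r = a_i(\mathbf{n}+s)+r'$, so $\mathbf{X}'_m$ is exactly the tuple $\mathbf{X}_m$ with $\mathbf{n}$ replaced by $\mathbf{n}+s$; here $s \ll a 2^m \le \log^{1/10}x$ by \eqref{xam}, \eqref{msm}.

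The heart of the matter is a ``subadditivity up to negligible error'' for the conditional entropies:
\[
\mathbf{H}(\mathbf{X}_{m+1} \mid \mathbf{Y}_{<m+1}) \le 2\,\mathbf{H}(\mathbf{X}_m \mid \mathbf{Y}_{<m+1}) + E_m,
\]
where $\sum_m E_m = o(1)$ (all sums over $m$ here and below being restricted to the range \eqref{msm}). Since $\mathbf{X}_{m+1}=(\mathbf{X}_m,\mathbf{X}'_m)$, subadditivity of entropy reduces this to $\mathbf{H}(\mathbf{X}'_m\mid\mathbf{Y}_{<m+1}) \le \mathbf{H}(\mathbf{X}_m\mid\mathbf{Y}_{<m+1}) + E_m$. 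Now $\mathbf{Y}_{<m+1}$ is a tuple of residues $\mathbf{n}\bmod p$ with $p\le 2^{m+1}$, so $\mathbf{n}\mapsto\mathbf{n}+s$ acts on it by a fixed bijection; combining this with the identification of $\mathbf{X}'_m$ above, the pair $(\mathbf{X}'_m,\mathbf{Y}_{<m+1})$ is a fixed bijective image of the pair $(\mathbf{X}_m,\mathbf{Y}_{<m+1})$ computed with $\mathbf{n}+s$ in place of $\mathbf{n}$, whence $\mathbf{H}(\mathbf{X}'_m\mid\mathbf{Y}_{<m+1})$ equals the value of $\mathbf{H}(\mathbf{X}_m\mid\mathbf{Y}_{<m+1})$ for $\mathbf{n}+s$ in place of $\mathbf{n}$, and it remains only to bound the change in this entropy under the shift. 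By the near translation-invariance of the logarithmic distribution on $(x^\eps,x]$ under a shift by $s$ — quantified by the total variation estimate $O((\log^{1/10}x)/(x^\eps\log x))$ already recorded in this section — together with the Hölder-type entropy continuity bound \eqref{eqq28} applied to $(\mathbf{X}_m,\mathbf{Y}_{<m+1})$, this change is $\ll x^{-\eps/2}\cdot |\mathcal{W}|$, where $\mathcal{W}$ is the finite range of $(\mathbf{X}_m,\mathbf{Y}_{<m+1})$. Since $\log|\mathcal{W}| \le C2^m\log 2 + \sum_{p\le 2^{m+1}}\log p = O(a2^m) = O(a(\log x)^{(\log 2)/100})$, and $a$ is negligible compared with $(\log x)^{0.99}$ by \eqref{xam}, we get $|\mathcal{W}| \le x^{\eps/4}$; hence $E_m \ll x^{-\eps/4}$, and since there are at most $\log\log x$ relevant values of $m$, $\sum_m E_m = o(1)$.

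Granting this, I would feed in the mutual information via the chain rule. As $\mathbf{Y}_{<m+1} = (\mathbf{Y}_{<m},\mathbf{Y}_m)$,
\[
\mathbf{H}(\mathbf{X}_m\mid\mathbf{Y}_{<m+1}) = \mathbf{H}(\mathbf{X}_m\mid\mathbf{Y}_{<m}) - \mathbf{I}(\mathbf{X}_m:\mathbf{Y}_m\mid\mathbf{Y}_{<m}) = 2^m u_m - \mathbf{I}(\mathbf{X}_m:\mathbf{Y}_m\mid\mathbf{Y}_{<m}),
\]
while $\mathbf{H}(\mathbf{X}_{m+1}\mid\mathbf{Y}_{<m+1}) = 2^{m+1}u_{m+1}$. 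Substituting into the ``subadditivity'' inequality gives $2^{m+1}u_{m+1} \le 2^{m+1}u_m - 2\,\mathbf{I}(\mathbf{X}_m:\mathbf{Y}_m\mid\mathbf{Y}_{<m}) + E_m$, i.e.
\[
\frac{1}{2^m}\,\mathbf{I}(\mathbf{X}_m:\mathbf{Y}_m\mid\mathbf{Y}_{<m}) \le u_m - u_{m+1} + \frac{E_m}{2^{m+1}}.
\]
Summing over $\exp(a\eps^{-3}) \le m \le \frac{1}{100}\log\log x$ and using $0\le u_m \le C\log 2 = O(a)$ together with $\sum_m E_m = o(1)$, the right-hand side telescopes to $O(a)+o(1) \ll a$, which is the assertion of the proposition.

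The step demanding real care, rather than routine bookkeeping, is the ``subadditivity up to error'' of the second paragraph: one must correctly recognise $\mathbf{X}'_m$ as a translate of $\mathbf{X}_m$, check that the same translation acts compatibly (indeed bijectively) on the residue data $\mathbf{Y}_{<m+1}$ so that the joint conditional entropy is genuinely stable under the shift, and — most importantly — verify that passing from total-variation closeness to entropy closeness via \eqref{eqq28} costs only a negligible amount. This last point is exactly where the triple-exponential lower bound \eqref{xam} on $x$ is used: it forces the range of $(\mathbf{X}_m,\mathbf{Y}_{<m+1})$, whose logarithm is of order $a2^m \ll a(\log x)^{o(1)}$, to be sub-polynomial in $x$, so that multiplying it against the square root of the (tiny) total variation still leaves a summably small error. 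The remaining ingredients — the trivial entropy bound for $\mathbf{X}_m$, the chain-rule identities, and the telescoping — are routine.
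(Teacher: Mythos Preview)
Your proof is correct and follows essentially the same approach as the paper's: decompose $\mathbf{X}_{m+1}=(\mathbf{X}_m,\mathbf{X}'_m)$ with $\mathbf{X}'_m$ the shift of $\mathbf{X}_m$ by $s=(2aB+1)2^m$, use subadditivity of conditional entropy together with near translation-invariance of the logarithmic distribution (controlled via \eqref{eqq28}) to get $\mathbf{H}(\mathbf{X}_{m+1}\mid\mathbf{Y}_{<m+1})\le 2\mathbf{H}(\mathbf{X}_m\mid\mathbf{Y}_{<m+1})+E_m$, insert the chain-rule identity for mutual information, and telescope. The only cosmetic difference is that you telescope over the range \eqref{msm} and bound the initial term by $u_{m_0}\le C\log 2=O(a)$, whereas the paper telescopes down to $m=1$ and bounds by $\mathbf{H}(\mathbf{X}_1)=O(a)$; these are equivalent.
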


\begin{proof}  For any $m$ obeying \eqref{msm}, consider the quantity
$$ \mathbf{H}( \mathbf{X}_{m+1} | \mathbf{Y}_{<m+1} ).$$
We can view $\mathbf{X}_{m+1}$ as a pair $(\mathbf{X}_m, \mathbf{X}'_{m})$, where
$$ \mathbf{X}'_{m} \coloneqq ( \lambda( a_i \mathbf{n}' + r ) )_{1 \leq i \leq k; 1 \leq r \leq (2aB+1) a_i 2^m} $$
and $\mathbf{n}' \coloneqq \mathbf{n} + (2aB+1) 2^m$.   By the Shannon entropy inequalities, we thus have
$$ \mathbf{H}( \mathbf{X}_{m+1} | \mathbf{Y}_{<m+1} )
\leq \mathbf{H}( \mathbf{X}_{m} | \mathbf{Y}_{<m+1} ) + \mathbf{H}( \mathbf{X}'_{m} | \mathbf{Y}_{<m+1} ).
$$
If we write
$$ \mathbf{Y}'_{<m+1} \coloneqq ( \mathbf{n}' \hbox{ mod } p )_{p \leq 2^{m+1}}$$
then $\mathbf{Y}_{<m+1}$ and $\mathbf{Y}'_{<m+1}$ define the same $\sigma$-algebra (each random variable is a deterministic function of the other), and so we have
$$ \mathbf{H}( \mathbf{X}_{m+1} | \mathbf{Y}_{<m+1} )
\leq \mathbf{H}( \mathbf{X}_{m} | \mathbf{Y}_{<m+1} ) + \mathbf{H}( \mathbf{X}'_{m} | \mathbf{Y}'_{<m+1} ).
$$
The total variation distance between $\mathbf{n}$ and $\mathbf{n}'$  can be computed to be $O( \exp(O(2^m)) / x^\eps )$.  Since $\mathbf{Y}_{<m+1}$ takes on $O( \exp(O(2^m)))$ values, we see from \eqref{eqq28} that
$$ \mathbf{H}( \mathbf{Y}'_{<m+1} ) = \mathbf{H}(\mathbf{Y}_{<m+1} ) + O( \exp(O(2^m)) / x^{\eps/2} ).$$
Similarly, since the random variables $(\mathbf{X}_{m},\mathbf{Y}_{<m+1})$ and $(\mathbf{X}_{m}',\mathbf{Y}_{<m+1}')$ also  take on $O( \exp(O(2^m)))$ values and are deterministic functions of $\mathbf{n}$ and $\mathbf{n'}$, respectively, by \eqref{eqq28} we again have
$$ \mathbf{H}( \mathbf{X}'_{m}, \mathbf{Y}'_{<m+1} ) = \mathbf{H}(\mathbf{X}_{m}, \mathbf{Y}_{<m+1} ) + O( \exp(O(2^m)) / x^{\eps/2} ),$$
and hence on subtracting
$$ \mathbf{H}( \mathbf{X}'_{m}| \mathbf{Y}'_{<m+1} ) = \mathbf{H}(\mathbf{X}_{m}| \mathbf{Y}_{<m+1} ) + O( \exp(O(2^m)) / x^{\eps/2} ).$$
Thus we have
$$ \mathbf{H}( \mathbf{X}_{m+1} | \mathbf{Y}_{<m+1} )
\leq 2\mathbf{H}( \mathbf{X}_{m} | \mathbf{Y}_{<m+1} ) + O( \exp(O(2^m)) / x^{\eps/2} ).$$
But we can write $\mathbf{Y}_{<m+1}$ as a pair $(\mathbf{Y}_{<m}, \mathbf{Y}_m)$, to conclude that
$$ \mathbf{H}( \mathbf{X}_{m} | \mathbf{Y}_{<m+1} ) = \mathbf{H}( \mathbf{X}_{m} | \mathbf{Y}_{<m} ) - \mathbf{I}( \mathbf{X}_m : \mathbf{Y}_m | \mathbf{Y}_{<m} ).$$
Inserting this identity and rearranging, we conclude that
$$ \frac{1}{2^m} \mathbf{I}( \mathbf{X}_m : \mathbf{Y}_m | \mathbf{Y}_{<m} ) \leq
 \frac{1}{2^m} \mathbf{H}( \mathbf{X}_{m} | \mathbf{Y}_{<m} )-\frac{1}{2^{m+1}} \mathbf{H}( \mathbf{X}_{m+1} | \mathbf{Y}_{<m+1} )  + O( \exp(O(2^m)) / x^{\eps/2} )$$
and thus on summing the telescoping series
$$ \sum_{m \leq \frac{1}{100} \log\log x} \frac{1}{2^m} \mathbf{I}( \mathbf{X}_m : \mathbf{Y}_m | \mathbf{Y}_{<m} ) \ll \mathbf{H}(\mathbf{X}_1) + 1$$
(say).  Since $\mathbf{X}_1$ takes at most $\exp(O(a))$ values, we have $\mathbf{H}(\mathbf{X}_1) = O(a)$, and the claim follows.
\end{proof}

\section{Using the Gowers norms}\label{compar-proof}

We now prove Theorem \ref{compar}.  As stated previously, we will rely heavily on the theory of the Gowers norms, which we now recall.

\begin{definition}[Gowers norms]  Given integers $k\geq 1$ and $N\geq 1$ and a function $f:\Z/N\Z\to \mathbb{C}$, we define the Gowers norms $U^k(\Z/N\Z)$ by
\begin{align*}
 \|f\|_{U^k(\Z/N\Z)}\coloneqq \left(\E_{n\in \Z/N\Z}\E_{h_1,\ldots, h_k\in \Z/N\Z}\prod_{\mathbf{\omega} \in \{0,1\}^k}\mathcal{C}^{|\mathbf{\omega}|}f(n+\mathbf{\omega}\cdot \mathbf{h})\right)^{2^{-k}}, 
\end{align*}
where $\mathcal{C}$ is the complex conjugation operator, $|\omega|$ is the number of ones in $\omega\in \{0,1\}^k$, $\mathbf{h}=(h_1,\ldots, h_k)$, and $\cdot$ denotes the inner product of two vectors. One easily sees that $\|f\|_{U^k(\Z/N\Z)}$ is a well-defined nonnegative quantity. We can then define the Gowers $U^k[N]$-norm of a function $f:\{1,\ldots, N\}\to \mathbb{C}$ defined on a finite interval by 
\begin{align*}
 \|f\|_{U^k[N]}\coloneqq \frac{\left\|f\cdot 1_{[1,N]}\right\|_{U^k(\mathbb{Z}_{N'})}}{\left\|1_{[1,N]}\right\|_{U^k(\mathbb{Z}_{N'})}}   
\end{align*}
where $N'=3N$, say (one easily sees that the definition is independent of the choice of $N'>2N$) and $f\cdot 1_{[1,N]}$ is to be interpreted as a function of period $N'$, and hence as a function on $\mathbb{Z}_{N}'$. 
\end{definition}

For the basic properties of Gowers norms, see \cite[Chapter 11]{tao-vu}.  The main general fact we will need about these norms is the following.

\begin{lemma}[A generalised von Neumann theorem]\label{le1} For $k\in \mathbb{N}$, let $\theta, \phi_1,\ldots, \phi_k:\mathbb{Z}\to \mathbb{C}$ be functions with $|\phi_j|\leq 1$. Also let $a_j,b_j,r_j\in \mathbb{Z}$ for $1\leq j\leq k$, and $W\in \mathbb{N}$ with $W\leq N^{0.1}$. Then  
\begin{align*}
\left|\E_{d\leq \frac{N}{W}}\E_{n\leq N}\theta(d)\phi_1(a_1 n+Wb_1d+r_1)\cdots \phi_k(a_k n+Wb_{k}d+r_k)\right|\leq C \|\theta\|_{U^{k}[\frac{N}{W}]}+o_{N\to \infty}(1)   
\end{align*}
for some constant $C>0$ depending only on $k$ and the numbers $a_1,\dots,a_k,b_1,\dots,b_k$, but independent of $W$ and $r_1,\dots,r_k$.
\end{lemma}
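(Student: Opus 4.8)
The plan is to reduce the multilinear average to a single Gowers norm of $\theta$ by the standard generalised von Neumann argument, i.e.\ a sequence of Cauchy--Schwarz steps in the $h$-directions, being careful to keep all constants independent of $W$ and the shifts $r_1,\dots,r_k$. First I would dispose of some easy reductions. Since $|\phi_j|\le 1$ and the expectation over $n\le N$ is harmless, the only subtlety is that the linear forms in the variable $d$ are $Wb_jd$ rather than $b_jd$; rescaling $d$ by $W$ is exactly what turns a $U^k$-norm on $[N/W]$ into a useful object, and this is why the hypothesis $W\le N^{0.1}$ appears (so that $N/W\to\infty$ and the error terms below are genuinely $o_{N\to\infty}(1)$). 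I would also reduce to the case where the coefficients $a_1b_j-a_jb_1$ (or, after relabelling, the relevant pairwise determinants) are nonzero, since a pair of linear forms that are parallel in the $(n,d)$ variables can be merged; the constant $C$ is then allowed to depend on how many such mergings occur. After these reductions the linear forms $\psi_j(n,d)=a_jn+Wb_jd+r_j$ are pairwise non-proportional in $(n,d)$, with the $d$-coefficients $Wb_j$.

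The core of the argument is an induction on $k$, or equivalently $k$ applications of the Cauchy--Schwarz inequality. At the $i$-th step one duplicates the $d$-variable along the direction that kills the $i$-th linear form: write $d=d_0+h$ (with an appropriate integer multiple so that $\psi_i$ is unchanged), apply Cauchy--Schwarz in the variable that $\psi_i$ does \emph{not} depend on, and thereby remove $\phi_i$ from the product while replacing $\theta(d)$ by a product $\theta(d)\overline{\theta(d+c_ih)}$ (up to the $W$-scaling). Iterating over all $k$ forms, one is left with an average of $\prod_{\omega\in\{0,1\}^k}\mathcal{C}^{|\omega|}\theta(d+\omega\cdot \mathbf{h}')$ over $d$ and a box of shift parameters $\mathbf{h}'$, plus the free averages over $n$ and the auxiliary variables, each of which contributes a factor bounded by $1$. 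Recognising this average, after the substitution $d\mapsto Wd$ and comparison of the interval $[N/W]$ with its embedding into $\Z/N'\Z$ (using the definition of $\|\cdot\|_{U^k[N/W]}$ and the fact that $\|1_{[1,M]}\|_{U^k(\Z_{M'})}\gg 1$), as a power of $\|\theta\|_{U^k[N/W]}$, and taking $2^{-k}$-th powers at the end via Hölder, yields the claimed bound with a constant $C$ depending only on $k$ and the $a_j,b_j$; crucially the shifts $r_j$ only ever enter through changes of variables in $n$ and $d$ and so drop out, and $W$ only enters through the rescaling $d\mapsto Wd$, which is precisely absorbed into the normalisation of the $U^k[N/W]$-norm.

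The main obstacle I anticipate is bookkeeping rather than conceptual: ensuring that the Cauchy--Schwarz steps can be carried out \emph{in the $d$-variable} (which has range only $N/W$) rather than in $n$, since it is $\theta$ — a function we have no control over beyond its Gowers norm — that must survive to the end, while the $\phi_j$ are the ones we are allowed to discard. This forces the duplication to be along $d$, and one must check that after $i$ steps the surviving forms $\psi_{i+1},\dots,\psi_k$ are still pairwise independent as functions of the \emph{remaining} free variables, so that the next Cauchy--Schwarz step is legitimate; this is where the non-proportionality reductions from the first paragraph are used. A secondary technical point is the passage between the interval $[N/W]$ and the cyclic group $\Z/N'\Z$ in the definition of $\|\theta\|_{U^k[N/W]}$, together with the fact that $W\le N^{0.1}$ guarantees $N/W$ is large enough that the boundary effects and the normalising denominator $\|1_{[1,N/W]}\|_{U^k}$ contribute only $1+o_{N\to\infty}(1)$; here one invokes the standard facts about Gowers norms from \cite[Chapter 11]{tao-vu}.
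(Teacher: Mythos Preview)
Your approach is essentially correct and follows the classical iterated Cauchy--Schwarz route to the generalised von Neumann theorem, but the paper's proof is organised rather differently and is cleaner on both of the points you flag as obstacles. First, rather than carrying $W$ through the argument and absorbing it into the $U^k[N/W]$-normalisation at the end, the paper splits the $n$-variable into residue classes modulo $W$; writing $n = Wn' + s$ turns each $\phi_j(a_jn + Wb_jd + r_j)$ into $\widetilde\phi_j(a_jn' + b_jd)$ for a new $1$-bounded function $\widetilde\phi_j$, so that $W$ and the shifts $r_j$ disappear immediately and one is reduced to the $W=1$ case on the scale $N' = N/W$. Second, instead of $k$ iterated Cauchy--Schwarz steps (which, as you correctly anticipate, require care; note that your phrase ``duplicates the $d$-variable'' is a bit off, since each $\psi_i$ depends on both $n$ and $d$, so the shift killing $\psi_i$ must be in a coupled $(n,d)$-direction), the paper introduces $k$ new variables at once via the substitution $d = d_1 + \cdots + d_k$, $n = n' - \sum_\ell b_\ell d_\ell$. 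After this the $i$-th factor $\phi_i(a_in' + \sum_\ell d_\ell(b_i-b_\ell))$ is independent of $d_i$, while $\theta'(d_1+\cdots+d_k)$ depends on all of them, so a single application of the Gowers--Cauchy--Schwarz inequality yields the bound directly. Your route works but carries exactly the inductive bookkeeping you worry about; the paper's change of variables sidesteps it entirely.
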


Without the $W$-aspect, this is standard; see for instance \cite[Lemma 2]{fhk}. However, the uniformity of the bounds in $W$ (and $r_1,\dots,r_k$) will be crucial in our arguments.

\begin{proof} We shall adapt the proof of \cite[Proposition 3.3]{tao-higher}. By splitting the variable $n$ into residue classes $\pmod W$ and setting $N'\coloneqq \frac{N}{W}$ it suffices to show that
\begin{align*}
\left|\E_{d\leq N'}\E_{n\leq N'}\theta(d)\phi_1(W(a_1 n+b_1d)+r_1')\cdots \phi_k(W(a_k n+b_{k}d)+r_k')\right|\leq C \|\theta\|_{U^{k}[N']}+o_{N'\to \infty}(1)     
\end{align*}
for all integers $r'_1,\dots,r'_k$. To simplify notation, we will call $N'$ just $N$. By considering the functions $\widetilde{\phi}_j(n)\coloneqq \phi_j(Wn+r_j')$, we see that it suffices to prove for all functions $|\phi_j|\leq 1$ that
\begin{align}\label{eqq26}
\left|\E_{d\leq N}\E_{n\leq N}\theta(d)\phi_1(a_1 n+b_1d)\cdots \phi_k(a_k n+b_{k}d)\right|\leq C \|\theta\|_{U^{k}[N]} +o_{N\to \infty}(1).      
\end{align}
Since the statement of \eqref{eqq26} involves the values of the functions $\theta$ and $\phi_i$ only on $(-HN,HN)$, where $H=\max_{i\leq k}(|a_i|+|b_i|)+1$, we may assume that the functions $\theta$ and $\phi_i$ are $2HN$-periodic, and hence they can be interpreted as functions on $\mathbb{Z}_{2HN}$. We are then reduced to showing that
\begin{align*}
\left|\E_{d\in \mathbb{Z}_{2HN}}\E_{n\in \mathbb{Z}_{2HN}}\theta(d)1_{[0,N]}(d)\prod_{i=1}^k \phi_i(a_i n+db_i)1_{[0,N]}(n)\right|\leq C'\|\theta\|_{U^{k}[N]}+o_{N\to \infty}(1)    \end{align*}
for some constant $C'$, since one can then set $C \coloneqq (2H)^2 C'$. By approximating $1_{[0,N]}(n)$ with a Lipschitz function, and then further with a finite Fourier series as in \cite[Appendix C]{gt-linear}, and redefining the functions $\phi_j$, we may eliminate the factor $1_{[0,N]}(n)$. Then, making a change of variables $d=d_1+\cdots+d_k$, $n=n'-d_1b_1-\cdots-d_kb_k$, we are left with showing that
\begin{align}\label{eq8}
\left|\E_{d_1,\ldots, d_k\in \mathbb{Z}_{2HN}}\theta'(d_1+\cdots +d_k)\prod_{i=1}^k \phi_i\left(a_in'+\sum_{\ell=1}^{k}d_{\ell}(b_i-b_{\ell})\right)\right|\leq C''\|\theta\|_{U^{k}[N]}+o_{N\to \infty}(1),    
\end{align}
for all $n'\in \mathbb{Z}_{2HN}$, where $\theta'(d)\coloneqq \theta(d)1_{[0,N]}(d)$. By the Gowers-Cauchy-Schwarz inequality (see e.g., \cite[(B.7)]{gt-linear}), we have
\begin{align*}
\left|\E_{d_1,\ldots, d_k\in \mathbb{Z}_{2HN}}\theta'(d_1+\cdots +d_k)\prod_{i=1}^k\phi_i'(L_i(d_1,\ldots, d_k))\right|\leq \|\theta'\|_{U^{k}(\mathbb{Z}_{2HN})}
\end{align*}
for any functions $\theta'$ and $\phi_i'$ bounded by $1$ in modulus and any linear forms $L_i:\mathbb{Z}_{2HN}^k\to \mathbb{Z}_{2HN}$, with $L_i$ independent of the $i$th coordinate. Applying this to the left-hand side of \eqref{eq8}, where each term involving $\phi_i$ is independent of the variable $d_i$, we see that
\begin{align*}
 \left|\E_{d_1,\ldots, d_k\in \mathbb{Z}_{2HN}}\theta'(d_1+\cdots +d_k)\prod_{i=1}^k \phi_i\left(a_in'+\sum_{\ell=1}^{k}d_{\ell}(b_i-b_{\ell})\right)\right|\leq \|\theta'\|_{U^{k}(\mathbb{Z}_{2HN})}.   
\end{align*}
Then, by noting that
\begin{align*}
\|\theta(n)1_{[0,N]}(n)\|_{U^{k}(\mathbb{Z}_{2HN})}=\|\theta\|_{U^k[N]} \cdot \|1_{[0,N]}\|_{U^{k}(\mathbb{Z}_{2HN})}\leq \|\theta\|_{U^k[N]},   
\end{align*}
the lemma follows.

\end{proof}

Next, we need control on the Gowers norms for the primes.

\begin{lemma}[Gowers uniformity of the primes]\label{le2} Let $k\in \mathbb{N}$, and let $w\in \mathbb{N}$ be a large parameter. Further, let $W=\prod_{p\leq w}p$, and let $b\in [1,W]$ be coprime to $W$. Then for any $N$ large enough in terms of $w$, the $W$-tricked von Mangoldt function
\begin{align}\label{eq25}
\Lambda_{b,W}(n)\coloneqq \frac{\varphi(W)}{W}\Lambda(Wn+b)    
\end{align}
enjoys the Gowers uniformity bound
\begin{align*}
\|\Lambda_{b,W}-1\|_{U^{k+1}[N]}=o_{w\to \infty}(1).    
\end{align*}
\end{lemma}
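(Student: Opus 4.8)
The plan is to deduce this lemma from the work of Green, Tao and Ziegler on the Gowers uniformity of the primes. The argument is the standard combination of the inverse theorem for the Gowers norms with the orthogonality of the von Mangoldt function to nilsequences; the role of the $W$-trick is precisely to strip away the ``local'' obstructions — the bias of the primes in residue classes to small moduli — that would otherwise prevent $\Lambda_{b,W}$ from resembling the constant function $1$. There is essentially nothing genuinely new to prove; the one point that needs checking is that the statements invoked from the literature are sufficiently uniform in the residue $b$ and in the modulus $W$.

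Concretely, I would first reduce, as in \cite{gt-linear}, to a truncated model $\Lambda'_{b,W}$ of $\Lambda_{b,W}$ — for instance obtained by removing the integers with a small prime factor, or via the $\Lambda^\sharp+\Lambda^\flat$ splitting — with the feature that $\Lambda'_{b,W}-1$ is dominated (up to $o(1)$ errors in $U^{k+1}$) by a pseudorandom majorant, and such that $\|\Lambda_{b,W}-\Lambda'_{b,W}\|_{U^{k+1}[N]}=o_{N\to\infty}(1)$. Second, I would argue by contradiction: if $\|\Lambda_{b,W}-1\|_{U^{k+1}[N]}\geq\delta$ for some fixed $\delta>0$ along a sequence $N\to\infty$ with $w$ held fixed, then by the inverse conjecture $\mathrm{GI}(k)$ for the $U^{k+1}$-norm, established in \cite{gtz} (and, when $k=2$, already available from \cite{gt-u3}), the function $\Lambda_{b,W}-1$ correlates, with correlation $\gg_\delta 1$, with a nilsequence $n\mapsto F(g(n)\Gamma)$ of complexity $O_{\delta,k}(1)$ on a nilmanifold of bounded step and dimension. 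Third, I would invoke the orthogonality of the ($W$-tricked) von Mangoldt function to nilsequences, which follows by combining the M\"obius-and-nilsequences estimate of \cite{gt-mobius} with the quantitative equidistribution theory on nilmanifolds from \cite{gt-linear}: it gives that the above correlation is $o_{N\to\infty}(1)$, uniformly over $b$ coprime to $W$ and over nilsequences of bounded complexity, once $N$ is large in terms of $w$. For $N$ large enough this contradicts the previous step, so $\|\Lambda_{b,W}-1\|_{U^{k+1}[N]}=o_{N\to\infty}(1)$ for each fixed $w$; a diagonalisation in $w$ then upgrades this to the asserted bound $o_{w\to\infty}(1)$, after a further (qualitative) enlargement of the threshold on $N$.

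The step I expect to be the only real point requiring care is the uniformity in $b$ and $W$: one needs the inverse theorem and the nilsequence-orthogonality estimate in their $W$-tricked forms, with constants independent of $b$ (for $b$ coprime to $W$) and with the error $o_{w\to\infty}(1)$ genuinely tending to $0$ as $w\to\infty$. This uniformity is exactly what the $W$-trick was introduced to provide in \cite{gt-linear}, \cite{gt-mobius}, \cite{gtz}, so no substantially new input is needed and the depth of the lemma resides entirely in those cited results. I also note that the qualitative, ineffective character of the bound is inherited from the equidistribution inputs, which is also the reason Theorem~\ref{compar} can only be stated qualitatively.
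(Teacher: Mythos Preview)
Your proposal is correct and matches the paper's approach: the paper's entire proof is the one-line citation ``This was proven in \cite{gt-linear}, subject to conjectures that were later verified in \cite{gt-mobius}, \cite{gtz},'' and what you have written is a faithful unpacking of how those three references combine (pseudorandom majorant, inverse theorem $\mathrm{GI}(k)$, M\"obius--nilsequences orthogonality) to yield the stated bound. The uniformity in $b$ and $W$ that you flag as the only point requiring care is indeed built into the $W$-tricked formulations in those papers, so no additional argument is needed.
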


\begin{proof} This was proven in \cite{gt-linear}, subject to conjectures that were later verified in \cite{gt-mobius}, \cite{gtz}.
\end{proof}

We now prove Theorem \ref{compar}.  Let $k,a_1,\dots,a_k,b_1,\dots,b_k, \eps,w,H_-,H_+,x,W,a,m$ be as in that theorem.  Because $\Lambda(p) = \log(2^m) + O(1)$ when $p$ is a prime with $2^m < p \leq 2^{m+1}$, and $\Lambda$ is non-zero for only $O( 2^{2m/3})$ (say) other integers in the interval $(2^m,2^{m+1}]$, we have
$$ \E^{\log}_{2^m < p \leq 2^{m+1}} f_x(ap) = \E^{\log}_{2^m < d \leq 2^{m+1}} f_x(an) \Lambda(d) + O(\eps),$$
since $m$ is assumed to be sufficiently large depending on $\eps$.
The contribution to the right-hand side of those $d$ that share a common factor with $W$ is negligible (as $\Lambda(d)$ will then vanish unless $n$ is a power of a prime less than or equal to $w$), thus
$$ \E^{\log}_{2^m < p \leq 2^{m+1}} f_x(ap) = \frac{W}{\phi(W)} \E^{\log}_{2^m < d \leq 2^{m+1}: (d,W)=1} f_x(ad) \Lambda(d) + O(\eps).$$
It therefore suffices to show that
$$
\E^{\log}_{2^m < d \leq 2^{m+1}: (d,W)=1} f_x(ad) (\frac{W}{\phi(W)} \Lambda(d)-1) \ll \eps.
$$
Partitioning into residue classes modulo $W$ and using \eqref{eq25}, it suffices to show that
$$
\E^{\log}_{2^m/W < d \leq 2^{m+1}/W} f_x(a(Wd+b)) (\Lambda_{b,W}(d)-1) \ll \eps
$$
whenever $1 \leq b \leq W$ is coprime to $W$.

Fix $b$.  By summation by parts, it will suffice to show that
$$
\E_{d \leq H} f_x(a(Wd+b)) (\Lambda_{b,W}(d)-1) \ll \eps
$$
whenever $2^m/W \leq H \leq 2^{m+1}/W$.  From \eqref{fax}, and replacing the average $n \leq x$ with the average $x^\eps < n \leq x$, we have
$$ f_x(a(Wd+b)) = \E_{x^\eps < n \leq x}^{\log}\lambda( a_1 n + W a b_1 d + abb_1) \dots \lambda( a_k n + W a b_k d + abb_k )  + O(\eps),$$
so it suffices to show that
\begin{equation}\label{targ}
\E_{d \leq H} \E_{x^\eps < n \leq x}^{\log} (\Lambda_{b,W}(d)-1) \lambda( a_1 n + W a b_1 d + abb_1) \dots \lambda( a_k n + W a b_k d + abb_k ) \ll \eps. 
\end{equation}
The quantity $x$ (or $x^\eps$) is large compared with $aHW$.  Thus we can shift $n$ by any quantity $1 \leq n' \leq aHW$ without affecting the above average by more than $O(\eps)$.  Performing this shift and then averaging in $n'$, the left-hand side of \eqref{targ} may be written as
$$
\E_{x^\eps < n \leq x}^{\log} \E_{d \leq H} \E_{n' \leq aHW} (\Lambda_{b,W}(d)-1) \lambda( a_1 n' + W a b_1 d + a_1 n + abb_1) \dots \lambda( a_k n' + W a b_k d + a_1 n + abb_k ) + O(\eps). $$
Applying Lemma \ref{le1} with $N$ replaced by $aHW$, $W$ replaced by $aW$, $n$ replaced by $n'$, and the $r_j$ replaced by $a_j n + abb_j$ for $1,\dots,k$, we can bound this as
$$
O( \|\Lambda_{b,W}-1\|_{U^k[H]} ) + o_{H\to \infty}(1) + O(\eps),$$
but by Lemma \ref{le2} this is $O(\eps)$ as required.

\end{document}